\newtheorem{theorem}{Theorem}[section]
\newtheorem{proposition}[theorem]{Proposition}
\newtheorem{corollary}[theorem]{Corollary}
\newtheorem{lemma}[theorem]{Lemma}
\newtheorem{remark}[theorem]{Remark}
\DeclareMathOperator{\height}{h}
\DeclareMathOperator{\heightp}{h_p}
\newcommand{\C}{\mathbb{C}}
\newcommand{\Q}{\mathbb{Q}}
\newcommand{\R}{\mathbb{R}}
\newcommand{\Z}{\mathbb{Z}}
\newcommand{\NN}{\mathcal{N}}
\newcommand{\eps}{\varepsilon}
\newcommand{\Mu}{\textrm{M}}
\newcommand{\tho}{\text{th}}
\title{Values of generalized Liouville power series at algebraic numbers}
\author{Yu.~Bilu\thanks{Supported by the ANR project JINVARIANT.}, D.~Marques\thanks{Supported by the Franco-Brazilian Mathematics Network.}, C.~G.~Moreira\thanks{Partially supported by FAPERJ and CNPq.}}
\date{\today}
\numberwithin{equation}{section}
\renewcommand*\l@section[2]{
	\ifnum \c@tocdepth >\z@
	\addpenalty\@secpenalty
	\addvspace{0.2em \@plus\p@}
	\setlength\@tempdima{1.5em}
	\begingroup
	\parindent \z@ \rightskip \@pnumwidth
	\parfillskip -\@pnumwidth
	\leavevmode \bfseries
	\advance\leftskip\@tempdima
	\hskip -\leftskip
	#1\nobreak\hfil \nobreak\hb@xt@\@pnumwidth{\hss #2}\par
	\endgroup
	\fi}
\begin{document}
\hfuzz=4pt

\maketitle

\begin{abstract}

For every positive integer~$m$ LeVeque (1953)  defined the $U_m$-numbers as the transcendental numbers that admit very good approximation by algebraic numbers of degree~$m$, but not by those of smaller degree. In these terms,  Mahler's $U$-numbers are the transcendental numbers which are $U_m$ for some~$m$. In 1965 Mahler showed  that (properly defined) lacunary power series  with integers coefficients  take $U$-values at algebraic numbers, unless the value is algebraic for an obvious reason. However, his argument does not specify to which $U_m$ the value belongs.

In this article, we introduce the notion of \textit{generalized Liouville series}, and give a necessary and sufficient condition for their values to be $U_m$. As an application, we show that a generalized Liouville series takes a $U_m$-value at a simple algebraic integer of degree~$m$, unless the value is algebraic for an obvious reason.  (An algebraic number~$\alpha$ is called \textit{simple} if the number field $\Q(\alpha)$ does not have a proper subfield other than~$\Q$.) 
\end{abstract}

 {\footnotesize
 
\tableofcontents

}

\section{Introduction}
\label{sintro}

In 1844 Liouville~\cite{Li44,Li51} showed that  ${\sum_{n=1}^\infty a^{-n!}}$ is a transcendental number for any integer ${a>1}$; this was the historically first proof of existence of transcendental numbers.  He proved it by showing that an  algebraic number cannot be too well approximated by rationals, while the sum above can be very well approximated by  the finite sums ${a^{-1!}+\cdots+ a^{-n!}}$.   Real numbers that admit too good approximation by rationals are called \textit{Liouville numbers}. Here is the precise definition: ${\alpha \in \R}$ is a Liouville number if for every ${w>0}$ there exist infinitely many ${p/q\in \Q}$ such that ${|\alpha-p/q|<q^{-w}}$.

Mahler~\cite{Ma32} and LeVeque~\cite{Le53} generalized the notion of Liouville numbers, by introducing  $U$-numbers and $U_m$-numbers for every positive integer~$m$. To recall their definitions, let us fix some notation  and terminology. Given a polynomial  ${P(T):=a_mT^m+\cdots+a_0\in \C[T]}$, we denote by $|P|$   the maximal absolute value of its coefficients:
${|P|:=\max\{|a_0|, \ldots, |a_m|\}}$. 
We denote by $\bar\Q$ the field of complex algebraic numbers.  In particular,  unless the contrary is indicated, in this article ``algebraic number'' means ``complex algebraic number''.  The \textit{minimal polynomial} of ${\alpha \in \bar\Q}$ is, by definition,   the $\Z$-irreducible\footnote{A polynomial ${P(T)\in \Z[T]}$ is called \textit{$\Z$-irreducible} if it is irreducible in the ring $\Z[T]$.  The Gauss lemma implies that ${P(T):=a_mT^m+\cdots+a_0\in \Z[T]}$ is $\Z$-irreducible if and only if it is irreducible in $\Q[T]$ and ${\gcd(a_0, \ldots, a_m)=1}$.} polynomial vanishing at~$\alpha$; we denote it by  $\Mu_\alpha(T)$. It is well-defined up to multiplication by $\pm1$.

Now let~$m$ be a positive integer. We denote by $U_m$ the set of complex transcendental numbers~$\gamma$ with the following property:~$m$ is the smallest positive integer such that for every ${w>0}$ there exist infinitely many algebraic numbers~$\alpha$ of degree~$m$ satisfying ${|\gamma-\alpha|<|\Mu_\alpha|^{-w}}$. Informally,~$\gamma$ can be very well approximated by algebraic numbers of degree~$m$, but not by algebraic numbers of smaller degree. 
The numbers from the set $U_m$ are commonly called \textit{$U_m$-numbers}. In particular, the Liouville numbers  are exactly the $U_1$-numbers.

Furthermore, we set 
${U:=\bigcup_{m=1}^\infty U_m}$
and call the numbers from this set \textit{$U$-numbers}. Informally, $U$-numbers are those that can be very well approximated by algebraic numbers of bounded degree.


In this article we will consider  power series 
$$
f(z)=\sum_{k\geq 0}a_kz^k \in \Z[[z]]
$$ 
with integer coefficients and positive convergence radius ${R_f>0}$. We will always assume that $f(z)$ has infinitely many non-zero coefficients: it does not degenerate to a polynomial. 
We call the series $f(z)$ 
\textit{strongly lacunary}\footnote{Mahler~\cite{Ma65} calls such series \textit{admissible}.} if there exist integer sequences $(s_n)_{n\ge1}$ and $(t_n)_{n\ge0}$ such that 
\begin{align}
\label{emon}
&0\le  t_0\le s_1 <t_1\le s_2<t_2\le s_3\ldots,\\
\label{ezer}
&a_k=0\quad\text{for}\quad k<t_0 \quad\text{and for}\quad s_n<k<t_n \quad (n=1,2,\ldots), \\
\label{elacu}
&\frac{t_n}{s_n}\to \infty \quad  \text{as} \quad n\to \infty.
\end{align}
In particular, $f$ can be written as 
$$
f(z)=\sum_{n= 1}^\infty P_n(z), \quad \text{where}\quad P_n(z):=\sum_{k=t_{n-1}}^{s_n}a_kz^k.
$$
We will use the notation
$$
F_n(z):= \sum_{k=0}^{s_n} a_kz^k= P_1(z)+\cdots +P_n(z). 
$$
For instance, the classical Liouville series 
\begin{equation}
\label{eliouvilleser}
\sum_{n=1}^\infty z^{n!}
\end{equation}
is  a strongly lacunary series, with ${s_n:=n!}$ and ${t_n=(n+1)!}$. More generally, a series  ${\sum_{n=1}^\infty b_nz^{s_n}}$  with ${s_{n+1}/s_n\to \infty}$ is called \textit{diagonal} strongly lacunary series; for it 
${t_n=s_{n+1}}$ and  ${P_n(z)=b_nz^{s_n}}$.


Generalizing Liouville's  argument, Mahler~\cite{Ma65}
proved the following. 

\begin{theorem}
\label{thmahler}
Let $f(z)$ be a strongly  lacunary power series with integer coefficients  and let ${\alpha\in \bar\Q}$  satisfy ${|\alpha|<R_f}$. Assume that 
\begin{equation}
\label{econdmal}
\text{$P_n(\alpha) \ne 0$ for infinitely many~$n$}. 
\end{equation}
Then $f(\alpha)$ is transcendental. 
\end{theorem}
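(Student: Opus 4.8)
The plan is to argue by contradiction, extending Liouville's original idea: assume ${\beta:=f(\alpha)}$ is algebraic, and derive a contradiction from~\eqref{elacu} by sandwiching the approximation errors ${\delta_n:=\beta-F_n(\alpha)}$ between an analytic upper bound and a Liouville-type lower bound. Note first that ${\delta_n\in K:=\Q(\alpha,\beta)}$ for every~$n$, that ${P_n(\alpha)=F_n(\alpha)-F_{n-1}(\alpha)=\delta_{n-1}-\delta_n}$, and hence that~\eqref{econdmal} forces the set ${S:=\{n:\delta_n\ne0\}}$ to be infinite. We may also assume ${\alpha\ne 0}$, since for ${\alpha=0}$ we have ${f(\alpha)=a_0}$ and ${P_n(0)=0}$ for all ${n\ge2}$ (as ${t_{n-1}\ge t_1\ge 1}$), so~\eqref{econdmal} fails and there is nothing to prove.

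\emph{Upper bound.} Fix~$\rho$ with ${|\alpha|<\rho<R_f}$. Since ${\sum_k|a_k|\rho^k<\infty}$, there is a constant~$C$ with ${|a_k|\le C\rho^{-k}}$ for all~$k$. Putting ${q:=|\alpha|/\rho\in(0,1)}$ and using that ${a_k=0}$ for ${s_n<k<t_n}$, we get
$$
|\delta_n|=\Bigl|\sum_{k\ge t_n}a_k\alpha^k\Bigr|\le C\sum_{k\ge t_n}q^k=\frac{C}{1-q}\,q^{t_n}\qquad(n\ge1).
$$

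\emph{Lower bound.} Let ${D:=[K:\Q]}$, and choose positive integers~$a$ and~$b$ with ${a\alpha}$ and ${b\beta}$ algebraic integers. Since ${s_n\ge k}$ for every~$k$ occurring in~$F_n$, the number ${a^{s_n}F_n(\alpha)=\sum_{k=0}^{s_n}a_k a^{s_n-k}(a\alpha)^k}$ is an algebraic integer, so ${\gamma_n:=b\,a^{s_n}\delta_n}$ is an algebraic integer of~$K$, nonzero for ${n\in S}$. Hence ${|\mathrm N_{K/\Q}(\gamma_n)|\ge1}$, i.e. ${\prod_{\sigma}|\sigma\gamma_n|\ge1}$ over the embeddings ${\sigma\colon K\hookrightarrow\C}$, which gives
$$
|\delta_n|=\frac{|\gamma_n|}{b\,a^{s_n}}\ \ge\ \frac{1}{b\,a^{s_n}}\prod_{\sigma\ne\mathrm{id}}|\sigma\gamma_n|^{-1}\qquad(n\in S).
$$
To finish one bounds each conjugate: writing ${\sigma\gamma_n=b\,a^{s_n}\bigl(\sigma\beta-\sum_{k=0}^{s_n}a_k(\sigma\alpha)^k\bigr)}$ and using ${|a_k|\le C\rho^{-k}}$ together with fixed bounds ${|\sigma\alpha|\le A}$, ${|\sigma\beta|\le B}$, a routine geometric-sum estimate yields ${|\sigma\gamma_n|\le C_1^{\,s_n}}$ for all large~$n$ and a suitable constant~$C_1$. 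Combining the last two displays, ${|\delta_n|\ge C_2^{-s_n}}$ for ${n\in S}$ large, with a constant ${C_2>1}$.

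\emph{Conclusion and main obstacle.} For ${n\in S}$ large we would get ${C_2^{-s_n}\le \tfrac{C}{1-q}\,q^{t_n}}$; taking logarithms and dividing by~$s_n$ shows that ${t_n/s_n}$ stays bounded along the infinite set~$S$, which contradicts~\eqref{elacu} since ${s_n\to\infty}$. The one substantive point, and the part I expect to require the most care, is the lower bound: that a common denominator for~$F_n(\alpha)$ and the sizes of all its conjugates (its ``house'') grow only \emph{geometrically} in~$s_n$, i.e. like~$C^{s_n}$ — it is here that one uses only that ${\sigma\alpha}$ and ${\sigma\beta}$ are bounded, disregarding that ${f(\sigma\alpha)}$ may diverge. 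Liouville's inequality then only beats an error of size~$q^{t_n}$ when ${t_n/s_n}$ is large, which is exactly the lacunarity hypothesis~\eqref{elacu}; the role of~\eqref{econdmal} is merely to guarantee that infinitely many of the errors~$\delta_n$ are nonzero, so that the Liouville inequality actually applies.
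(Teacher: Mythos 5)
Your proof is correct. The paper does not reprove Theorem~\ref{thmahler} (it is quoted from Mahler), but your argument is exactly the classical Liouville--Mahler sandwich that also underlies the paper's Section~\ref{sproof}: the upper bound is the paper's Proposition~\ref{prupper} (Cauchy estimate ${|a_k|\le C\rho^{-k}}$ plus the vanishing of $a_k$ for ${s_n<k<t_n}$), and your lower bound is Liouville's inequality applied to ${\delta_n=f(\alpha)-F_n(\alpha)}$, phrased via denominators and norms of algebraic integers rather than via the absolute logarithmic height as in Propositions~\ref{prliouv} and~\ref{prlower}; the two formulations are equivalent, and your key observation that the denominator and the conjugates of $F_n(\alpha)$ grow only like $C^{s_n}$ is precisely the height bound~\eqref{ehefna}. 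The differences are that you specialize to ${\beta=f(\alpha)}$ and argue by contradiction, which lets you avoid~\eqref{estrlac} entirely and thus prove the statement in its full generality for strongly lacunary series, whereas the paper's Proposition~\ref{prlower} proves a stronger effective lower bound against \emph{all} algebraic $\beta$ of bounded degree (needed for the $U_m$ classification) and for that it does need~\eqref{estrlac}. Your reductions (the case ${\alpha=0}$, and the deduction from~\eqref{econdmal} that infinitely many $\delta_n$ are nonzero via ${P_n(\alpha)=\delta_{n-1}-\delta_n}$) are both sound.
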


Clearly, condition~\eqref{econdmal} is necessary for $f(\alpha)$ being transcendental; Mahler proved that it is sufficient as well. 

In fact, Mahler's argument implies that $f(\alpha)$ is a $U$-number, because is is well approximated by infinitely many algebraic numbers $F_n(\alpha)$ of bounded degree. 

Mahler, however, does not specify to which of the sets $U_m$ belongs $f(\alpha)$.  If~$\alpha$ is of degree~$m$, then ${f(\alpha)\in U_{m'}}$ for some ${m'\le m}$, and simple examples show that~$m'$ can be strictly smaller than~$m$. Say, for the  series
${f(z):=\sum_{n=2}^\infty z^{n!}}$
we have ${f(1/\sqrt2)\in U_1}$. 

The objective of this article is to give a  sufficient condition for $f(\alpha)$ to be an $U_m$-number. We do it  for a slightly narrower, than  strongly lacunary, class of series. We call  $f(z)$ a \textit{generalized Liouville series}  if there exist integer sequences $(s_n)_{n\ge1}$ and $(t_n)_{n\ge0}$ satisfying~\eqref{emon},~\eqref{ezer},~\eqref{elacu} and the additional condition
\begin{equation}
\label{estrlac}
\text{the quotients}\quad \frac{s_{n+1}}{t_n}\quad(n\ge 1)\quad \text{are bounded}.
\end{equation}

\begin{remark}
To put it very informally, condition~\eqref{elacu} can be interpreted as ``the series $f(z)$ has very long blocks of consecutive zero coefficients'', while~\eqref{estrlac}   means that these blocks occur ``sufficiently often''
\end{remark}

Note that the diagonal strongly lacunary series, mentioned above, and, in particular, the Liouville series~\eqref{eliouvilleser}, belong to the class of generalized Liouville series: for them ${s_{n+1}/t_n=1}$.

Our principal result is the following theorem. 
\begin{theorem}
\label{thus}
Let $f(z)$ be a  generalized Liouville series with integer coefficients, and~$m$ the smallest integer with the following property:  among the numbers  $F_n(\alpha)$ there are infinitely many distinct algebraic numbers of degree~$m$. Then ${f(\alpha)\in U_m}$. 
\end{theorem}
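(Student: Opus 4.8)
The plan is to prove the two statements $f(\alpha)\in U_{m'}$ for some $m'\le m$ and $f(\alpha)\notin U_d$ for every $d<m$; together they give $f(\alpha)\in U_m$. Fix once and for all $\rho$ with ${|\alpha|<\rho<R_f}$ and set ${Q:=\rho/|\alpha|>1}$. From ${\sum_k|a_k|\rho^k<\infty}$ we get ${|a_k|\le c\rho^{-k}}$, hence ${|f(\alpha)-F_n(\alpha)|=\bigl|\sum_{k\ge t_n}a_k\alpha^k\bigr|\le C'Q^{-t_n}}$; and, writing $A$ for the house of~$\alpha$ and $D_\alpha$ for its denominator, $F_n(\alpha)$ has house at most $e^{cs_n}$ and denominator dividing $D_\alpha^{s_n}$, so ${|\Mu_{F_n(\alpha)}|\le e^{cs_n}}$, all constants depending only on $f,\alpha$. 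Note two standing consequences of \eqref{emon}--\eqref{elacu}: infinitely many distinct $F_n(\alpha)$ force $P_n(\alpha)\ne0$ infinitely often, so \eqref{econdmal} holds and Theorem~\ref{thmahler} makes $f(\alpha)$ transcendental; and ${t_n/t_{n-1}\ge t_n/s_n\to\infty}$, i.e.\ the gaps grow super-geometrically. For the \emph{easy half}: since $f(\alpha)$ is transcendental and $F_n(\alpha)\to f(\alpha)$, each value among the $F_n(\alpha)$ is attained for only finitely many $n$, so for each of the infinitely many distinct degree-$m$ values we may pick an index realizing it, and these indices tend to infinity. For such an index, $|f(\alpha)-F_n(\alpha)|\le C'Q^{-t_n}$ while $|\Mu_{F_n(\alpha)}|\le e^{cs_n}$, and since $s_n=o(t_n)$ by \eqref{elacu} we get $|f(\alpha)-F_n(\alpha)|<|\Mu_{F_n(\alpha)}|^{-w}$ once $n$ is large. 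Thus for every $w$ there are infinitely many algebraic numbers of degree $m$ approximating $f(\alpha)$ with exponent $w$, so $f(\alpha)\in U_{m'}$ with $m'\le m$.

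The \emph{hard half} rests on the following Key Lemma: \emph{there is a constant $\kappa>0$, depending only on $f,\alpha$, such that $|f(\alpha)-\beta|>|\Mu_\beta|^{-\kappa}$ for every algebraic number $\beta$ of degree $<m$ with $|\Mu_\beta|$ sufficiently large.} Granting it, if some $d<m$ had $f(\alpha)\in U_d$ we could take $w>\kappa$ and obtain infinitely many $\beta$ of degree $d$ with $|f(\alpha)-\beta|<|\Mu_\beta|^{-w}$; comparing with the Key Lemma forces $|\Mu_\beta|^{\,w-\kappa}<1$, bounding $|\Mu_\beta|$, so there are only finitely many such $\beta$ — a contradiction. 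Hence $f(\alpha)\notin U_d$ for all $d<m$, and with the easy half this gives $f(\alpha)\in U_m$.

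To prove the Key Lemma, note first that by minimality of $m$ only finitely many distinct $F_n(\alpha)$ have degree $<m$; being algebraic, they are bounded away from the transcendental limit $f(\alpha)$, so there is $N_1$ with $\deg F_n(\alpha)\ge m$ for $n\ge N_1$. Fix $\beta$ of degree $d<m$ with $|f(\alpha)-\beta|<1$ (otherwise the lemma is trivial) and $|\Mu_\beta|$ large. For $n\ge N_1$ we have $\Mu_{F_n(\alpha)}(\beta)\ne0$ (since $\deg\beta<\deg F_n(\alpha)$); evaluating $\Mu_{F_n(\alpha)}$ at $\beta$ and applying Liouville's inequality to this nonzero algebraic number, together with the above size, house and denominator estimates for $F_n(\alpha)$, gives
\[
|F_n(\alpha)-\beta|\ \ge\ \lambda_1\,|\Mu_\beta|^{-\lambda_2}\,e^{-\lambda_3 s_n},
\]
with $\lambda_1,\lambda_2,\lambda_3>0$ depending only on $f,\alpha$, \emph{not} on $n$ or $\beta$. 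Now let $C_0$ bound the quotients in \eqref{estrlac}, so $s_n\le C_0 t_{n-1}$, and let $n^\ast$ be the least $n\ge N_1$ with $t_n\log Q-\lambda_3 C_0\,t_{n-1}\ge\lambda_2\log|\Mu_\beta|+O(1)$. Since $t_{n-1}/t_n\to0$, the left side is eventually $\ge\tfrac12 t_n\log Q\to\infty$, so $n^\ast$ exists; and since likewise $t_{n^\ast-2}/t_{n^\ast-1}\to0$, failure of the condition at $n^\ast-1$ forces $\tfrac12 t_{n^\ast-1}\log Q\le\lambda_2\log|\Mu_\beta|+O(1)$, whence $s_{n^\ast}\le C_0 t_{n^\ast-1}\le c_4\log|\Mu_\beta|$ with $c_4$ absolute. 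The defining inequality for $n^\ast$ says precisely $C'Q^{-t_{n^\ast}}\le\tfrac12\lambda_1|\Mu_\beta|^{-\lambda_2}e^{-\lambda_3 s_{n^\ast}}$, so
\[
|f(\alpha)-\beta|\ \ge\ |F_{n^\ast}(\alpha)-\beta|-|f(\alpha)-F_{n^\ast}(\alpha)|\ \ge\ \tfrac12\lambda_1|\Mu_\beta|^{-\lambda_2}e^{-\lambda_3 s_{n^\ast}}\ \ge\ \tfrac12\lambda_1|\Mu_\beta|^{-\lambda_2-\lambda_3 c_4},
\]
which is the Key Lemma with $\kappa:=\lambda_2+\lambda_3 c_4+1$.

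The main obstacle is exactly this last step. Comparing $\beta$ with the partial sum $F_n(\alpha)$ of \emph{matching precision} is fatal: such an $n$ has $t_n$, and a priori $s_n$, of order $w\log|\Mu_\beta|$, so the factor $e^{-\lambda_3 s_n}$ in the Liouville bound is of size $|\Mu_\beta|^{-O(w)}$ and drowns everything. The point of \eqref{estrlac}, \emph{together with} the super-geometric growth of the gaps, is that one may instead take $n^\ast$ only as large as the \emph{fixed} Liouville exponent $\lambda_2$ requires: then the gap endpoint $t_{n^\ast-1}$, hence the degree $s_{n^\ast}$ of $F_{n^\ast}$, is $O(\log|\Mu_\beta|)$ with an \emph{absolute} implied constant, so the exponent $\kappa$ in the Key Lemma does not depend on $w$ — which is what makes the counting contradiction in the second paragraph go through. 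Accordingly \eqref{estrlac} is used only here, to pass from the bound on $t_{n^\ast-1}$ to the bound on $s_{n^\ast}$; the easy half and Mahler's theorem need only \eqref{emon}--\eqref{elacu}.
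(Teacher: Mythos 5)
Your proof is correct and takes essentially the same route as the paper: the same easy half via the partial sums $F_n(\alpha)$, and the same hard half, namely a lower bound ${|f(\alpha)-\beta|\gg|\Mu_\beta|^{-\kappa}}$ for~$\beta$ of degree ${<m}$ obtained by comparing~$\beta$ with the first partial sum whose precision $t_n$ beats the Liouville barrier, with~\eqref{estrlac} guaranteeing that its complexity $s_n$ is still ${O(\log|\Mu_\beta|)}$. The only differences are presentational: you work with the classical height $|\Mu_\beta|$ and get the Liouville bound by evaluating $\Mu_{F_n(\alpha)}$ at~$\beta$, whereas the paper uses the absolute logarithmic height and applies Liouville's inequality directly to ${F_n(\alpha)-\beta}$ in the compositum field.
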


Here are some consequences.  The first is a direct generalization of Liouville's result. 

\begin{corollary}
\label{coliou}
Let~$m$ be a positive integer,  ${\alpha \in \Q}$  satisfying ${0<\alpha<1}$, and ${\alpha^{1/m}\in \C}$ some determination of the $m^\tho$ root. Assume that~$\alpha$  is not a $p^\tho$ power in~$\Q$ for any prime ${p\mid m}$. Then ${f(\alpha^{1/m}) \in U_m}$, where $f(z)$ is the Liouville series~\eqref{eliouvilleser}. For instance, ${f(2^{-1/m})\in U_m}$. 
\end{corollary}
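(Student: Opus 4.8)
The plan is to apply Theorem~\ref{thus} with $\beta:=\alpha^{1/m}$ playing the role of ``$\alpha$''. The Liouville series $f(z)=\sum_{n\ge1}z^{n!}$ is a diagonal strongly lacunary series, hence a generalized Liouville series, with radius of convergence $R_f=1$, and $|\beta|=\alpha^{1/m}<1=R_f$; so the theorem is applicable once I identify the integer it attaches to $\beta$ --- the least $m'$ for which infinitely many of the partial sums $F_n(\beta)=\sum_{j=1}^{n}\beta^{j!}$ are distinct algebraic numbers of degree $m'$ --- and show $m'=m$. This integer is independent of the chosen determination: any two determinations $\beta,\beta'$ are conjugate over $\Q$ via an isomorphism $\sigma$ with $\sigma\beta=\beta'$, and since $F_n$ has integer coefficients $F_n(\beta')=\sigma(F_n(\beta))$, so degrees and distinctness match; hence I may take $\beta$ to be the positive real root, and Theorem~\ref{thus} then applies to every determination with this common $m'$. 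Since $\alpha>0$ we have $\alpha\notin-4\Q^4$, so the classical criterion for irreducibility of binomials, together with the hypothesis on $\alpha$, shows that $T^m-\alpha$ is $\Q$-irreducible and $[\Q(\beta):\Q]=m$. (The case $m=1$ is immediate, all $F_n(\alpha)$ being distinct rationals; and for $\alpha=1/2$ the hypothesis follows from an elementary $2$-adic valuation count, so $f(2^{-1/m})\in U_m$ will be a special case. From now on $m\ge2$.)

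The engine of the argument is that $\beta^{j!}\in\Q$ as soon as $m\mid j!$. Let $J\ge2$ be the smallest integer with $m\mid J!$; it depends only on $m$. Then for every $n\ge J$,
\[
F_n(\beta)=c+r_n,\qquad c:=\sum_{j=1}^{J-1}\beta^{j!},\qquad r_n:=\sum_{j=J}^{n}\alpha^{j!/m}\in\Q .
\]
Consequently $\Q(F_n(\beta))=\Q(c)$ for all $n\ge J$, so all of these $F_n(\beta)$ have the same degree $\deg c$; and since $\alpha>0$ the sequence $(r_n)_{n\ge J}$ is strictly increasing, so they are pairwise distinct. Hence $m'=\deg c$, and the corollary reduces to the claim that $\Q(c)=\Q(\beta)$.

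For that I would first pin down the subfields of the radical extension $\Q(\beta)=\Q(\alpha^{1/m})$, showing that each equals $\Q(\beta^{k})$ for some divisor $k$ of $m$. Given a subfield $L$ with $[\Q(\beta):L]=e$, the minimal polynomial of $\beta$ over $L$ divides $T^m-\alpha$ over $\bar\Q$, so its roots are $\{\zeta\beta:\zeta\in S\}$ for a set $S$ of $m$-th roots of unity with $|S|=e$; then $N_{\Q(\beta)/L}(\beta)=\beta^{e}\prod_{\zeta\in S}\zeta$, and $\prod_{\zeta\in S}\zeta$ is a root of unity lying in $\Q(\beta)\subset\R$, hence equal to $\pm1$, so $\beta^{e}\in L$. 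Since $T^{m/e}-\alpha$ is again $\Q$-irreducible (again using $\alpha>0$ and the hypothesis), $[\Q(\beta^{e}):\Q]=m/e=[L:\Q]$, whence $L=\Q(\beta^{e})$. Thus a proper subfield corresponds to a divisor $k>1$ of $m$, and $\Q(\beta^{k})$ is spanned over $\Q$ by the monomials $\beta^{i}$ with $k\mid i$. On the other hand, reducing exponents modulo $m$ via $\beta^{m}=\alpha$ one writes $c=\sum_{i=1}^{m-1}d_i\beta^{i}$ with $d_i\in\Q_{\ge 0}$ and $d_1\ge1$, the summand $j=1$ contributing $\beta$. Since no $k>1$ divides $1$, the element $c$ lies in no proper subfield, so $\Q(c)=\Q(\beta)$; that is, $m'=m$, and Theorem~\ref{thus} gives $f(\beta)\in U_m$.

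I expect the last paragraph to be the main obstacle: determining the subfield lattice of $\Q(\alpha^{1/m})$ --- for composite $m$ the number $\beta$ is not simple, so this needs an argument --- and then checking that the particular element $c$ escapes every proper subfield. Reducing to the positive real root is what makes the root-of-unity bookkeeping in the norm computation trivial; everything else is routine manipulation of the partial sums together with the standard irreducibility facts for binomials $T^N-\alpha$.
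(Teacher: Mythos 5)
Your proposal is correct and follows essentially the same route as the paper: apply Theorem~\ref{thus}, reduce to showing that the stable irrational part of the partial sums generates $\Q(\alpha^{1/m})$, classify the subfields of the radical extension via the norm/root-of-unity argument (this is exactly the paper's Lemma~\ref{lradic}), and observe that the coefficient of $\beta^{1}$ in the basis expansion is positive, hence the element lies in no proper subfield. The only differences are cosmetic (you truncate at the least $J$ with $m\mid J!$ rather than at $m-1$, inline the subfield lemma, and spell out the independence of the chosen determination and the distinctness of the partial sums, which the paper leaves implicit).
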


\begin{remark}
Corollary~\ref{coliou} is not really new; for instance, it can be deduced from (a very special case of) Theorem~1   of Alniaçik~\cite{Al79}. For more on this, see \cite{CM14,CMT21} and the references therein. 
\end{remark}

Call an algebraic number~$\alpha$ \textit{simple} if the number field $\Q(\alpha)$ does not have a proper subfield: ${\Q\subset K\subset\Q(\alpha)}$ implies ${K=\Q}$ or ${K=\Q(\alpha)}$.  Theorem~\ref{thus} implies that, for simple algebraic \textit{integers} of degree~$m$, Mahler's condition~\eqref{econdmal} is sufficient for ${f(\alpha)\in U_m}$.

\begin{corollary}
\label{cosim}
Let $f(z)$ be a  generalized Liouville series with integer coefficients and~$\alpha$ a simple algebraic integer of degree~$m$  satisfying ${|\alpha|<R_f}$. Assume that~\eqref{econdmal} holds. Then $f(\alpha)$ is a $U_m$-number.
\end{corollary}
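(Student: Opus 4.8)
The plan is to deduce Corollary~\ref{cosim} directly from Theorem~\ref{thus} by identifying the integer~$m$ that occurs there. Recall that a generalized Liouville series is in particular strongly lacunary (it satisfies~\eqref{emon},~\eqref{ezer},~\eqref{elacu}), so the hypotheses ${|\alpha|<R_f}$ and~\eqref{econdmal} together with Theorem~\ref{thmahler} already give that $f(\alpha)$ is transcendental. Moreover, since ${|\alpha|<R_f}$ the partial sums converge: ${F_n(\alpha)\to f(\alpha)}$. I would first note that the sequence ${(F_n(\alpha))_{n\ge 1}}$ must take infinitely many distinct values, for otherwise some value would be attained for infinitely many~$n$ and hence would equal the limit $f(\alpha)$, contradicting its transcendence.

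The heart of the matter is a degree computation for the numbers $F_n(\alpha)$. Since ${f\in\Z[[z]]}$ and $\alpha$ is an algebraic \emph{integer}, each $F_n(\alpha)$ lies in ${\Z[\alpha]\subseteq\OO_{\Q(\alpha)}}$; in particular ${\Q(F_n(\alpha))}$ is a subfield of ${\Q(\alpha)}$, and the simplicity of~$\alpha$ forces this subfield to be either~$\Q$ or ${\Q(\alpha)}$. Hence ${\deg F_n(\alpha)\in\{1,m\}}$ for every~$n$, and if ${\deg F_n(\alpha)=1}$ then moreover ${F_n(\alpha)\in\Q\cap\OO_{\Q(\alpha)}=\Z}$. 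Next I would show that ${F_n(\alpha)\in\Z}$ holds for at most finitely many~$n$: along an infinite set of such~$n$ the values $F_n(\alpha)$ would form a convergent sequence of rational integers, hence be eventually constant, and the limit $f(\alpha)$ would equal that constant — impossible by transcendence.

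Putting these together, for all but finitely many~$n$ one has ${\deg F_n(\alpha)=m}$; combined with the fact that infinitely many of the $F_n(\alpha)$ are distinct, this yields infinitely many \emph{distinct} $F_n(\alpha)$ of degree exactly~$m$. On the other hand, no smaller degree occurs infinitely often: degree~$1$ occurs for only finitely many~$n$, and by simplicity no degree strictly between~$1$ and~$m$ occurs for any~$n$. Therefore the smallest integer with the property required in Theorem~\ref{thus} is exactly ${m=\deg\alpha}$, and Theorem~\ref{thus} gives ${f(\alpha)\in U_m}$.

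I do not expect a serious obstacle here: once Theorem~\ref{thus} is granted, the whole argument is the elementary observation that simplicity collapses the lattice of intermediate fields (pinning the degrees of the $F_n(\alpha)$ to ${\{1,m\}}$), while integrality turns the degree-$1$ values into honest rational integers, so that convergence makes them stabilise. It is precisely this last point that requires $\alpha$ to be an algebraic \emph{integer}: for a general simple algebraic~$\alpha$ one would also need to control the denominators of the rational values $F_n(\alpha)$ before the stabilisation argument could be run, which need not be possible.
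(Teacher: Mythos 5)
Your proposal is correct and follows essentially the same route as the paper: reduce to Theorem~\ref{thus} by showing that infinitely many of the $F_n(\alpha)$ are distinct while, by simplicity and integrality, all but finitely many have degree exactly~$m$ (the degree-one values lie in $\Z$ and would force $f(\alpha)\in\Z$, contradicting Theorem~\ref{thmahler}). The only cosmetic difference is that you invoke transcendence for the ``infinitely many distinct values'' step where the paper argues directly from~\eqref{econdmal} via $P_n(\alpha)=0$; both are valid.
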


Following Mahler~\cite{Ma65}, we say that the power series $f(z)$ is of \textit{bounded type}  if its coefficients are bounded: there exists ${A>0}$ such that 
${|a_k|\le A}$ for all~$k$. For such a series we have ${R_f=1}$.  

\begin{corollary}
\label{coalmost}
Let $f(z)$ be a  generalized Liouville series  of bounded type with integer coefficients, and~$m$ a positive integer. Then  ${f(\alpha) \in U_m}$ for all but finitely many simple algebraic integers~$\alpha$ of degree~$m$ with ${|\alpha|<1}$. 
\end{corollary}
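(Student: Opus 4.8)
Since $f(z)$ is of bounded type we have ${R_f=1}$, so we consider algebraic integers~$\alpha$ with ${|\alpha|<1}$, for which $f(\alpha)$ is defined. The plan is to deduce everything from Corollary~\ref{cosim}: for a \emph{simple} algebraic integer~$\alpha$ of degree~$m$ with ${|\alpha|<1}$, that corollary already yields ${f(\alpha)\in U_m}$ \emph{as soon as} Mahler's condition~\eqref{econdmal} holds. Hence the whole statement will follow once I show that~\eqref{econdmal} fails for only finitely many algebraic integers~$\alpha$ of degree~$m$ — a claim that mentions neither simplicity nor the bound ${|\alpha|<1}$, and in which the bounded-type hypothesis does all the work.

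For that reduction, fix ${A>0}$ with ${|a_k|\le A}$ for all~$k$. First I would record that infinitely many of the $P_n(z)$ are non-zero: $f$ has infinitely many non-zero coefficients, and by~\eqref{ezer} each non-zero $a_k$ with ${k\ge t_0}$ lies in one of the blocks ${[t_{n-1},s_n]}$, i.e.\ contributes to some $P_n$. Fix such an~$n$ and factor ${P_n(z)=z^{c_n}Q_n(z)}$ with ${c_n\ge0}$, ${Q_n\in\Z[z]}$, ${Q_n(0)\ne0}$. The coefficients of $Q_n$ form a contiguous sub-block of those of $P_n$, hence are integers of absolute value ${\le A}$, while its leading coefficient has absolute value ${\ge1}$; so by the Cauchy bound every root of $Q_n$ — equivalently, every non-zero root of $P_n$ — lies in the disc ${|z|\le A+1}$. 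The crucial point is that this disc is \emph{independent of~$n$}.

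Now suppose~\eqref{econdmal} fails for an algebraic integer~$\alpha$ of degree~$m$; I may assume ${\alpha\ne0}$ (automatic unless ${m=1}$, in which case the single value ${\alpha=0}$ can be tossed into the finite exceptional set). Then ${P_N(\alpha)=0}$ for all large~$N$, so choosing~$N$ large with ${P_N\ne0}$ makes~$\alpha$ a root of~$P_N$. Since~$\alpha$ is an algebraic integer, $\Mu_\alpha$ is monic of degree~$m$, and from ${\Mu_\alpha\mid P_N}$ in $\Q[T]$ the Gauss lemma gives ${\Mu_\alpha\mid P_N}$ in $\Z[T]$; hence \emph{every} conjugate of~$\alpha$ is a non-zero root of $P_N$, so all conjugates of~$\alpha$ have absolute value ${\le A+1}$. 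The coefficients of $\Mu_\alpha$ are, up to sign, the elementary symmetric functions of the conjugates of~$\alpha$, hence integers bounded in terms of~$m$ and~$A$ only; so $\Mu_\alpha$, and therefore~$\alpha$, ranges over a finite set. This finishes the reduction, and with it the corollary.

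I do not expect a genuine obstacle here: the substance lies entirely in Theorem~\ref{thus} and Corollary~\ref{cosim}. The one point that needs care is the step from ``$\alpha$ is a root of $P_N$'' to ``every conjugate of~$\alpha$ is a root of $P_N$'', which rests on $\Mu_\alpha$ being monic so that the divisibility survives over~$\Z$; and the sole place ``bounded type'' is used is to make the Cauchy bound ${|z|\le A+1}$ uniform in~$n$, which is exactly what reduces the claim to the finiteness of algebraic integers of degree~$m$ whose conjugates all lie in a fixed disc.
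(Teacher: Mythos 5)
Your argument is correct, and its overall architecture is exactly the paper's: reduce Corollary~\ref{coalmost} to Corollary~\ref{cosim} by showing that Mahler's condition~\eqref{econdmal} can fail for only finitely many of the relevant~$\alpha$. Where you diverge is in how that finiteness is established. The paper isolates it as Proposition~\ref{prantima} and proves it with the height machinery already set up: bounded type gives ${\heightp(P_n)\le\log A}$, item~\ref{ipalzer} of Proposition~\ref{prhpol} gives ${\height(\alpha)\le\log(2A)}$ for any root~$\alpha$ of a non-zero~$P_n$, and Northcott's theorem finishes. You instead use the Cauchy bound to place all non-zero roots of every~$P_n$ in the fixed disc ${|z|\le A+1}$, pass to all conjugates of~$\alpha$ via ${\Mu_\alpha\mid P_N}$, and bound the (integer) coefficients of the monic polynomial~$\Mu_\alpha$ by elementary symmetric functions — in effect re-proving the special case of Northcott for algebraic integers with bounded house. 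Both routes are sound; the paper's is shorter given the tools in Section~\ref{she} and yields the stronger statement that~\eqref{econdmal} fails for only finitely many algebraic \emph{numbers} of bounded degree (which the paper flags as of independent interest), whereas yours is self-contained and elementary but genuinely needs~$\alpha$ to be an algebraic integer (for a general algebraic number, bounding the conjugates does not bound~$\Mu_\alpha$) — which is all the corollary requires. Two cosmetic remarks: the detour through the Gauss lemma is unnecessary, since divisibility ${\Mu_\alpha\mid P_N}$ in $\Q[T]$ already shows every conjugate of~$\alpha$ is a root of~$P_N$; and your handling of the degenerate cases (${\alpha=0}$, the existence of infinitely many non-zero~$P_n$) is careful and correct.
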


Combining this with the results S.~D.~Cohen \cite{Co80,Co81,Co81a} on the distribution of Galois groups, we deduce 
that a generalized Liouville series of bounded type take $U_m$-values at ``many'' algebraic numbers of degree~$m$. Here is the precise statement.  

\begin{corollary}
\label{comany}
Let $f(z)$ be a  generalized Liouville series  of bounded type with integer coefficients and ${m\ge 2}$ an integer. Then,   for ${X\ge 2}$, there exist at least ${2^mX^{m-1}-CX^{m-3/2}\log X}$ algebraic numbers~$\alpha$ of degree~$m$ with ${|\alpha|<1}$ and  ${|\Mu_\alpha|\le X}$, such that $f(\alpha)$ is a $U_m$-number. Here~$C$ is a positive number depending only on~$f$ and~$m$. 
\end{corollary}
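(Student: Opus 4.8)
The plan is to exhibit, for each $X\ge 2$, an explicit family of roughly $2^mX^{m-1}$ monic $\Z$-irreducible polynomials of degree $m$, each having a root in the open unit disc and generating a number field with no proper subfield, and then to apply Corollary~\ref{coalmost}. Concretely, for $X\ge2$ I would put
\[
\mathcal{F}_X:=\bigl\{\,T^m+a_{m-1}T^{m-1}+\cdots+a_1T+a_0\ :\ a_0\in\{1,-1\},\ a_i\in\Z,\ |a_i|\le X\ (1\le i\le m-1)\,\bigr\},
\]
so that $\#\mathcal{F}_X=2(2\lfloor X\rfloor+1)^{m-1}\ge 2^mX^{m-1}-O(X^{m-2})$, the implied constant depending only on $m$. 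Every $P\in\mathcal{F}_X$ is monic with $|P|\le X$, and the product of its roots has absolute value $|a_0|=1$; hence, unless all roots of $P$ lie on the unit circle, $P$ has a root $\alpha$ with $0<|\alpha|<1$. Moreover, if $P$ is $\Z$-irreducible and all of its roots lie on the unit circle then, by Kronecker's theorem, $P$ is a cyclotomic polynomial $\Phi_n$ with $\varphi(n)=m$, and there are only finitely many such $P$.

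Next I would bound the defects. Call $P\in\mathcal{F}_X$ \emph{bad} if it is reducible or if the Galois group of its splitting field, as a permutation group on the $m$ roots, is not $S_m$; otherwise call it \emph{good}. The crux of the argument is the estimate
\[
\#\{\,P\in\mathcal{F}_X\ :\ P\ \text{is bad}\,\}=O\!\left(X^{m-3/2}\log X\right),
\]
with implied constant depending only on $m$, and this is where the distribution results of S.~D.~Cohen~\cite{Co80,Co81,Co81a} are used. Up to the discrete choice of $a_0$, the set $\mathcal{F}_X$ is an $(m-1)$-dimensional box of side $2X$ with the constant term frozen, and for every prime $p\ll X$ the reductions modulo $p$ of its members are, up to relative error $O(p/X)$, equidistributed among the monic degree-$m$ polynomials over $\mathbb{F}_p$ with constant term congruent to $\pm1$. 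Among the latter a positive proportion are irreducible, a positive proportion factor as (irreducible quadratic)$\times$($m-2$ distinct linear factors), a positive proportion as (irreducible of degree $m-1$)$\times$(linear factor), and so on, by surjectivity of the norm maps $\mathbb{F}_{p^d}^*\to\mathbb{F}_p^*$ together with the standard counts of factorization types over a finite field. Feeding these local densities into Cohen's large-sieve argument---which bounds how many polynomials in a box can fail, after reduction modulo several suitable primes, to acquire cycle types that generate $S_m$---yields the displayed bound, with the dimension $m-1$ of the box replacing the dimension $m$ occurring in Cohen's unrestricted statements. (That the reducible $P\in\mathcal{F}_X$ number only $O(X^{m-2})$ can also be seen directly: an irreducible factor must have constant term $\pm1$, so a linear factor is $T\mp1$, a codimension-one condition, and factorizations into pieces of degree $\ge2$ are rarer still, e.g.\ by Mahler's inequality for $|P_1||P_2|$ against $|P_1P_2|$; in any case these are subsumed by the bad count.)

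Finally I would assemble the pieces. To each good $P\in\mathcal{F}_X$ that is not cyclotomic---all good $P$ but finitely many---attach a single root $\alpha_P$ with $0<|\alpha_P|<1$, which exists by the first paragraph. Then $\alpha_P$ is an algebraic integer with $\Mu_{\alpha_P}=P$, so $|\Mu_{\alpha_P}|=|P|\le X$, and $\alpha_P$ is simple, because for $G=S_m$ the point stabiliser $S_{m-1}$ is a maximal subgroup, whence $\Q(\alpha_P)$ has no proper subfield other than $\Q$. Distinct good polynomials are non-associate and $\Z$-irreducible, hence coprime, so the $\alpha_P$ are pairwise distinct. Since $f$ is of bounded type, $R_f=1$ and $|\alpha_P|<R_f$, so Corollary~\ref{coalmost} gives $f(\alpha_P)\in U_m$ for every good $P$ apart from finitely many. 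Hence the number of algebraic numbers $\alpha$ of degree $m$ with $|\alpha|<1$, $|\Mu_\alpha|\le X$ and $f(\alpha)\in U_m$ is at least
\[
\#\mathcal{F}_X-\#\{P\in\mathcal{F}_X:P\ \text{is bad}\}-O(1)\ \ge\ 2^mX^{m-1}-CX^{m-3/2}\log X
\]
for a suitable $C=C(f,m)>0$ and all $X\ge2$, since for $X\ge2$ both $O(X^{m-2})$ and $O(1)$ are dominated by $X^{m-3/2}\log X$.

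The step I expect to be the main obstacle is the estimate on bad polynomials, namely transferring Cohen's results to the codimension-one family $\{a_0=\pm1\}$. If the literature does not record this restricted version directly, I would rerun his large-sieve computation; the only genuinely new ingredient is the elementary verification that pinning the constant term to $\pm1$ does not annihilate any of the cycle-type densities over $\mathbb{F}_p$ needed to generate $S_m$, which is exactly what the norm-surjectivity remarks above provide.
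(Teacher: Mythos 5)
Your construction is the same as the paper's: take monic degree-$m$ polynomials with $|P|\le X$ and $P(0)=\pm1$, discard those with non-symmetric Galois group and the finitely many cyclotomic ones, use Kronecker's theorem to find a root inside the unit circle, observe that a symmetric Galois group makes that root a simple algebraic integer (maximality of $S_{m-1}$ in $S_m$), and invoke Corollary~\ref{coalmost}. All of that part of your argument is correct and matches the paper step for step.

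The gap is exactly where you predicted it: the bound $O(X^{m-3/2}\log X)$ on the ``bad'' polynomials in the codimension-one family $\{P(0)=\pm1\}$ is asserted but not proved. Your sketch (equidistribution of reductions mod $p$, norm surjectivity $\mathbb{F}_{p^d}^*\to\mathbb{F}_p^*$ to check that the needed cycle-type densities survive pinning the constant term, then a rerun of the large sieve) is plausible but is not a proof, and it is the entire quantitative content of the corollary. The paper closes this by splitting the step into two published results of Cohen that apply directly to the restricted family: Theorem~1 of \cite{Co80} shows that $T^m+u_{m-1}T^{m-1}+\cdots+u_1T+b$, with $b\ne0$ fixed and the other coefficients indeterminate, has Galois group $S_m$ over $\Q(u_1,\ldots,u_{m-1})$ when $m\ge3$; then Theorem~2.1 of \cite{Co81a}, a general quantitative Hilbert-irreducibility bound, gives at most $O(X^{m-3/2}\log X)$ specializations $(b_1,\ldots,b_{m-1})$ in the box where the group degenerates. (The case $m=2$ needs separate, elementary treatment --- a reducible monic quadratic with $P(0)=\pm1$ has roots dividing $\pm1$, so there are only $O(1)$ of them --- since Cohen's Theorem~1 requires at least two indeterminate coefficients; your sketch does not address this case.) So the architecture of your proof is right, but to make it complete you must either locate these two theorems of Cohen, as the paper does, or actually carry out the large-sieve computation you defer.
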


The bounded type hypothesis cannot be suppressed, because there exists a generalized Liouville series~$f$ with ${R_f=1}$ such that ${f(\alpha)\in \bar\Q}$ for every algebraic~$\alpha$ in the circle ${|z|<1}$.  Indeed, let ${U_1(T), U_2(T), \ldots \in \Z[T]}$ be a sequence of polynomials containing every non-constant polynomial with integer coefficients, and set 
${Q_n(T) := U_1(T)\cdots U_n(T)}$. Then  for every algebraic number~$\alpha$ we have ${Q_n(\alpha)=0}$ for all sufficiently big~$n$. Next,  let $(t_n)_{n\ge 0}$
be an increasing sequence of positive integers with the  properties
$$
t_{n-1} \ge \deg Q_{n}, \quad t_n > (n+1) t_{n-1}, \quad \max\{|Q_1|, \ldots |Q_{n}|\}^{1/t_{n-1}}\le 1+ \frac1{n} 
$$
for ${n\ge 1}$. We set  ${s_n := t_{n-1}+\deg Q_n}$, so that~\eqref{emon},~\eqref{ezer},~\eqref{elacu} are satisfied and ${s_n < 2t_{n-1}}$. 
Then the series 
$$
f(z):=\sum_{n=1}^\infty z^{t_{n-1}}Q_n(z) 
$$
is generalized Liouville  with ${R_f=1}$, and it takes algebraic values at algebraic numbers. See Sections~5 and~6 of Mahler~\cite{Ma65} for more elaborate examples. 

\paragraph{Plan of the article}
In Section~\ref{she} we recall the definitions and the basic facts about heights. In Section~\ref{sgap} we establish our principal technical tool, the \textit{gap principle}. 
Theorem~\ref{thus} is proved in Section~\ref{sproof}, and the corollaries are proved in Section~\ref{scols}.

\paragraph{Acknowledgments} 
We thank Joachim König for drawing our attention to the article~\cite{Co80}, and Yann Bugeaud for a stimulating discussion. We also thank the anonymous referee for careful reading and useful suggestions.

\section{Heights}
\label{she}
Mahler~\cite{Ma65} uses the notion of the ``height'' of an algebraic number~$\alpha$, defining it as $|\Mu_\alpha|$, the maximum of absolute values of the coefficients of the minimal polynomial. This notion of height was common at that time, but it is no longer in use, being replaced by the much more convenient notion of \textit{absolute logarithmic height}. In this section we recall the definition and basic of the absolute logarithmic height. In particular, we translate Mahler's definition of the $U_m$-numbers into the language of absolute logarithmic height. 

We start by fixing some conventions on absolute values.  Given a number field~$K$, we denote by $M_K$ the set of non-trivial absolute values on~$K$ normalized to extend the standard $p$-adic and infinite absolute values on~$\Q$. That is, if ${v\in M_K}$ and~$p$ is the underlying rational prime (finite or infinite), then 
${|p|_v=p^{-1}}$ if ${p<\infty}$ and\footnote{Here $1965$  is the year of publication of Mahler's article~\cite{Ma65}} ${|1965|_v=1965}$ if
${p=\infty}$.
With the chosen normalization, the product formula looks like 
\begin{equation}
\label{eprofo}
\prod |\alpha|_v^{d_v}=1 \qquad (\alpha\in K^\times). 
\end{equation}
Here $d_v$ denotes the\textit{ local degree} of~$v$ over~$\Q$: if ${v\mid p}$, then ${d_v:=[K_v:\Q_p]}$.

Let ${\alpha \in  \bar\Q}$ and let~$K$ be a number field containing~$\alpha$. Its \textit{absolute logarithmic height} (in the sequel, simply \textit{height}) $\height(\alpha)$ is defined by 
\begin{equation}
\label{ehedef}
\height(\alpha):= \frac{1}{[K:\Q]} \sum_{v\in M_K} d_v \log^+|\alpha|_v, 
\end{equation}
where ${\log^+ :=\max\{\log, 0\}}$. It is known that the right-hand side of~\eqref{ehedef} depends only on~$\alpha$, but not not on the particular choice of  the field~$K$.

For the basic properties of the height function the reader may consult, for instance, Sections~1.5 and~1.6 of~\cite{BG06}\footnote{Warning: in~\cite{BG06} a different normalization of absolute values is used.}. Below we list only the properties that will be used in this article. 
All the results of this section are well-known, but in some cases we provide quick proofs if we could not find a suitable reference.

\begin{proposition}
\label{prhe}
\begin{enumerate}
\item
\label{irat}
If ${m/n}$ is a rational number with  ${\gcd(m,n)=1}$, then ${\height(m/n)=\log \max\{|m|,|n|\}}$. In particular, if~$n$ is a non-zero integer then ${\height(n)=\log|n|}$. 

\item
\label{isumm}
Let ${\alpha, \beta \in \bar\Q}$. Then ${\height(\alpha+\beta) \le \height(\alpha)+\height(\beta)+\log 2}$.

\item
\label{ipower}
Let ${\alpha \in \bar\Q}$ and ${n\in \Z}$ (with ${\alpha \ne 0}$ if ${n<0}$). Then ${\height(\alpha^n)=|n|\height(\alpha)}$. In particular, for ${\alpha\ne 0}$ we have ${\height(\alpha^{-1})=\height(\alpha)}$.

\item
\label{igal}
Assume that~$\alpha$ and~$\beta$ are conjugate over~$\Q$. Then ${\height(\alpha)=\height(\beta)}$. 

\item
\label{inorth}
Let~$C$ be a positive number. Then there exist at most finitely many algebraic numbers~$\alpha$ with both height and degree not exceeding~$C$. 

\item
\label{ikro}
We have ${\height(\alpha) =0}$ if and only if ${\alpha=0}$ or~$\alpha$ is a root of unity. 
\end{enumerate}
\end{proposition}

Item~\ref{inorth} is usually called \textit{Northcott's theorem}, and item~\ref{ikro} is \textit{Kronecker's theorem}. 

\begin{proof}
Item~\ref{irat} follows easily from the definition.  The remaining items  are in~\cite{BG06}, see Proposition~{1.5.15}, Lemma~{1.5.18}, Proposition~{1.5.17}, Theorem~{1.6.8} and  Theorem~{1.5.9} therein. 
\end{proof}

\begin{proposition}[Liouville's inequality]
\label{prli}
Let~$\beta$ and~$\beta'$ be distinct complex algebraic numbers of degrees~$d$ and~$d'$, respectively. Then 
$$
|\beta-\beta'| \ge e^{-dd'(\height(\beta)+\height(\beta')+\log2)}. 
$$
\end{proposition}

In particular, in the case ${d'=1}$ we recover the classical Liouville's equality proved in~\cite{Li44,Li51}. 

The proof can be found, for instance, in \cite[Theorem~{1.5.21}]{BG06}.

We will also be using the notion of the \textit{height of a polynomial}. Given 
$$
P(T)=a_nT^n+\cdots +a_0\in \bar\Q[T],
$$
we define
\begin{equation}
\label{edefhop}
\height(P):= \frac{1}{[K:\Q]} \sum_{v\in M_K} d_v \log^+|P|_v, 
\end{equation}
where~$K$ is a number field containing the coefficients ${a_0, \ldots, a_m}$ and 
$$
|P|_v: =\max\{|a_0|_v, \ldots, |a_n|_v\}. 
$$
As before, the right-hand side of~\eqref{edefhop} does not depend on the particular choice of the field~$K$.

The height defined in~\eqref{edefhop} is sometimes called the \textit{affine height}. If~$P$ is a \textit{non-zero} polynomial, then we can also define its \textit{projective height}: 
$$
 \heightp(P):= \frac{1}{[K:\Q]} \sum_{v\in M_K} d_v \log|P|_v. 
$$
It follows from the definition that  ${\heightp(P) \le \height(P)}$. By the product formula, we have ${\heightp(\lambda P)=\heightp(P)}$ for ${\lambda\in \bar\Q^\times}$.  For a monic polynomial ${T-\alpha}$ of degree~$1$ we have 
\begin{equation}
\label{epodegone}
\height(T-\alpha)=\heightp(T-\alpha) =\height(\alpha). 
\end{equation}

\begin{remark}
\label{reintpol}
For a non-zero polynomial 
${P(T)=a_nT^n+\cdots +a_0\in \Z[T]}$
with integer coefficients we have 
\begin{equation}
\label{ehepoz}
\height(P) = \log |P|, \qquad \heightp(P)= \log |P|-\log \delta(P), 
\end{equation}
where 
${|P|:= \max\{|a_0|, \ldots, |a_n|\}}$ and ${\delta(P):= \gcd(a_0, \ldots, a_n)}$. 
\end{remark}

\begin{proposition}
\label{prhpol}
Let ${P(T) \in \bar\Q[T]}$ be a non-zero polynomial of degree~$n$. 

\begin{enumerate}
\item
\label{ihpal}
For ${\alpha \in \bar\Q}$ we have 
\begin{equation}
\label{ehpal}
\height\bigl(P(\alpha)\bigr) \le n\height(\alpha) +\height(P)+ \log(n+1). 
\end{equation}

\item
\label{ipalzer}
Let ${\alpha \in \bar\Q}$ be a root of~$P$. Then 
${\height(\alpha)\le \heightp(P)+\log 2}$. 

\item
\label{iprodpol}
Let ${Q_1(T), \ldots, Q_s(T) \in \bar\Q[T]}$ be such that ${P=Q_1\cdots Q_s}$. Then 
$$
\bigl|\heightp(P)-\bigl(\heightp(Q_1)+\cdots+\heightp(Q_s)\bigr)\bigr|\le n\log 2. 
$$

\item
\label{iroots}
Let ${\alpha_1, \ldots, \alpha_n\in \bar\Q}$ be the roots of $P(T)$ counted with multiplicities. Then 
\begin{equation}
\label{ehepheas}
\bigl|\heightp(P)-\bigl(\height(\alpha_1)+\cdots+\height(\alpha_n)\bigr)\bigr|\le n\log 2. 
\end{equation}

\end{enumerate}
\end{proposition}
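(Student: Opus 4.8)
The strategy mirrors the proofs already given for Propositions~\ref{prhe} and~\ref{prliouv}: fix a number field $K$ containing the relevant algebraic numbers, prove an inequality between $v$-adic absolute values at each place $v\in M_K$, multiply by the local degree $d_v$, sum over $M_K$, and divide by $[K:\Q]$. Two global facts then finish each item: the product formula~\eqref{eprofo}, which lets one delete any factor that is a fixed element of $K^\times$ (in particular a leading coefficient), and the relation $\sum_{v\mid\infty}d_v=[K:\Q]$, which turns a constant attached to each archimedean place into the same constant globally. The recurring elementary input is $|a_k\alpha^k|_v\le|P|_v\max\{1,|\alpha|_v\}^{n}$ for $0\le k\le n$, where $a_0,\dots,a_n$ are the coefficients of $P$ and $n=\deg P$.

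For item~\ref{ihpal}, take $K\ni\alpha$ together with the coefficients of $P$ and write $P(\alpha)=a_n\alpha^n+\cdots+a_0$. At a non-archimedean $v$ the ultrametric inequality gives $|P(\alpha)|_v\le|P|_v\max\{1,|\alpha|_v\}^{n}$, hence $\log^+|P(\alpha)|_v\le\log^+|P|_v+n\log^+|\alpha|_v$; at an archimedean $v$ the ordinary triangle inequality over the $n+1$ terms inserts a factor $n+1$, giving the same bound with an extra summand $\log(n+1)$. Weighting by $d_v$, summing and dividing by $[K:\Q]$ yields~\eqref{ehpal}, the term $\log(n+1)$ surviving precisely because $\sum_{v\mid\infty}d_v=[K:\Q]$.

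Item~\ref{ipalzer} is the one place where the archimedean constant must be pushed all the way down to $\log2$; it should be proved directly from $a_n\alpha^n=-(a_{n-1}\alpha^{n-1}+\cdots+a_0)$ rather than deduced from item~\ref{iroots}. Take $K\ni\alpha$ and the coefficients of $P$ (so $a_n\ne0$). At a non-archimedean $v$ one reads off $\max\{1,|\alpha|_v\}\le|P|_v/|a_n|_v$. At an archimedean $v$, when $|\alpha|_v>1$ summing the geometric series gives $|a_n|_v(|\alpha|_v-1)<|P|_v$, so $|\alpha|_v<1+|P|_v/|a_n|_v\le 2|P|_v/|a_n|_v$ (using $|P|_v\ge|a_n|_v$); thus $\max\{1,|\alpha|_v\}\le 2^{[v\mid\infty]}|P|_v/|a_n|_v$ at every place. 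Taking logarithms, weighting by $d_v$ and summing, the $|a_n|_v$ contribution vanishes by the product formula, the factor $2^{[v\mid\infty]}$ contributes exactly $\log 2$, and what remains is $\heightp(P)$; this gives $\height(\alpha)\le\heightp(P)+\log2$.

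Items~\ref{iprodpol} and~\ref{iroots} rest on the arithmetic Gauss lemma. Take $K$ large enough (a splitting field of $P$ will do). At a non-archimedean $v$ the Gauss norm $Q\mapsto|Q|_v$ is multiplicative (the substantive point, proved by isolating in a product $gh$ the coefficient of $g$ of lowest index attaining $|g|_v$, and likewise for $h$, so that the corresponding coefficient of $gh$ has $v$-absolute value exactly $|g|_v|h|_v$), whence $|Q_1\cdots Q_s|_v=|Q_1|_v\cdots|Q_s|_v$ and the quantity $\log|P|_v-\sum_i\log|Q_i|_v$ vanishes there. At an archimedean $v$ the defect $\bigl|\log|P|_v-\sum_i\log|Q_i|_v\bigr|$ is bounded by $(\deg P)\log2$ by Mahler's inequalities comparing the sup-norm of a product of polynomials with the product of the sup-norms (equivalently, comparing $|P|_v$ with the local Mahler measure $|a_n|_v\prod_j\max\{1,|\alpha_j|_v\}$, which is exactly multiplicative). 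Summing $d_v$-weighted over $M_K$ and using $\sum_{v\mid\infty}d_v=[K:\Q]$ gives item~\ref{iprodpol}; item~\ref{iroots} is then the special case $P=a_n(T-\alpha_1)\cdots(T-\alpha_n)$, using $\heightp(a_n)=0$ (product formula) and $\heightp(T-\alpha_j)=\height(\alpha_j)$ from~\eqref{epodegone}. The one genuinely non-routine step is this last archimedean estimate: one must arrange the comparison of sup-norms (or of sup-norm versus Mahler measure) so that the accumulated constant is exactly $(\deg P)\log2$ and not a larger multiple of it.
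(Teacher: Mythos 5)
Your local-to-global strategy (place-by-place inequalities, weight by $d_v$, sum, invoke the product formula and $\sum_{v\mid\infty}d_v=[K:\Q]$) is exactly the paper's. Item~\ref{ihpal} coincides with the paper's argument, which merely packages your archimedean/non-archimedean case split into the single inequality ${\max\{1,|P(\alpha)|_v\}\le\max\{1,|n+1|_v\}\max\{1,|P|_v\}\max\{1,|\alpha|_v\}^n}$, and item~\ref{iroots} is deduced from item~\ref{iprodpol} in the same way in both texts. The real difference is that for items~\ref{ipalzer} and~\ref{iprodpol} the paper gives no proof at all, citing \cite[Proposition~3.6]{BB13} and \cite[Theorem~1.6.13]{BG06}, whereas you reconstruct them. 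Your proof of item~\ref{ipalzer} is complete and correct: the bound ${\max\{1,|\alpha|_v\}\le 2^{[v\mid\infty]}|P|_v/|a_n|_v}$ at every place, the cancellation of $|a_n|_v$ by the product formula, and the conversion of the archimedean factor~$2$ into a global $\log 2$ are all sound.

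The one step you assert rather than prove is the archimedean estimate in item~\ref{iprodpol}, and you are right to flag it: the symmetric two-sided comparison ${2^{-d}|Q|_v\le M_v(Q)\le 2^{d}|Q|_v}$ between sup-norm and local Mahler measure, applied to~$P$ on one side and to the $Q_j$ on the other, accumulates to $2n\log 2$, not $n\log 2$. To land on $n\log 2$ one needs the one-sided refinements: for one direction ${|P|_v\le\prod_j L(Q_j)\le\prod_j(d_j+1)|Q_j|_v\le 2^{n}\prod_j|Q_j|_v}$ with~$L$ the $\ell^1$-norm; for the other, ${|Q_j|_v\le\binom{d_j}{\lfloor d_j/2\rfloor}M_v(Q_j)}$ combined with Landau's ${M_v(P)\le\sqrt{n+1}\,|P|_v}$ and the numerical inequality ${\binom{d}{\lfloor d/2\rfloor}\sqrt{d+1}\le 2^{d}}$. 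This is precisely the content of the theorem the paper cites, so your proposal is correct in route but not yet self-contained at the single point where the paper itself chose citation over proof.
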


\begin{proof}
To prove item~\ref{ihpal}, let~$K$ be a number field containing~$\alpha$ and the coefficients of~$P$. Then for ${v\in M_K}$ we have 
$$
\max\{1, |P(\alpha)|_v\} \le \max\{1, |n+1|_v\}\max\{1,|P|_v\}\max\{1, |\alpha|_v\}^n. 
$$
Taking the logarithm and summing up, we obtain~\eqref{ehpal}. 

For item~\ref{ipalzer} see \cite[Proposition~3.6]{BB13}\footnote{Proposition~3.6 in~\cite{BB13} is based on Proposition~3.2 therein. We use this opportunity to indicate a minor typo in the proof of the latter: in the displayed equation on the bottom of page~162, ${1-|\alpha|}$ in the denominator must be ${1-|\alpha|^{-1}}$.}. 

Item~\ref{iprodpol} is (a special case of) Theorem~1.6.13 from~\cite{BG06}.  To prove item~\eqref{iroots},  we may assume that~$P$ is monic, because multiplying~$P$ by  ${\lambda\in \bar\Q^\times}$  does not affect $\height_p(P)$. Hence
${P(T)=(T-\alpha_1)\cdots (T-\alpha_n)}$, 
and~\eqref{ehepheas} follows from~\eqref{epodegone} and item~\ref{iprodpol}. 
\end{proof}

\begin{corollary}
\label{cmihe}
Let~$\alpha$ be an algebraic number of degree~$n$ and ${\Mu_\alpha(T)\in \Z[T]}$ its minimal polynomial, as defined in Section~\ref{sintro}. Then 
$
{\bigl|\log|\Mu_\alpha|-n\height(\alpha)\bigr| \le n\log 2} 
$.
\end{corollary}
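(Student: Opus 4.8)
The statement connects the two notions of height appearing in the paper — Mahler's old-fashioned height $|\Mu_\alpha|$ and the absolute logarithmic height $\height(\alpha)$ — so the natural route is to pass through the projective height $\heightp$ of the minimal polynomial and invoke the machinery already assembled in Proposition~\ref{prhpol}. The whole argument is a two-line chain once the right facts are cited; there is no real obstacle, only bookkeeping about $\gcd$s and conjugates.

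First I would recall that $\Mu_\alpha(T)\in\Z[T]$ is $\Z$-irreducible by definition, so the Gauss-lemma characterization quoted in Section~\ref{sintro} gives ${\delta(\Mu_\alpha):=\gcd(a_0,\ldots,a_n)=1}$. Feeding this into the second formula of~\eqref{ehepoz} in Remark~\ref{reintpol} yields
\[
\heightp(\Mu_\alpha)=\log|\Mu_\alpha|-\log\delta(\Mu_\alpha)=\log|\Mu_\alpha|.
\]
So it suffices to compare $\heightp(\Mu_\alpha)$ with $n\height(\alpha)$.

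Next I would identify the roots of $\Mu_\alpha$: they are precisely the $n$ conjugates $\alpha_1,\ldots,\alpha_n$ of $\alpha$ over $\Q$, each occurring once (the minimal polynomial is separable). By Proposition~\ref{prhe}\eqref{igal}, conjugate algebraic numbers have equal height, so ${\height(\alpha_1)=\cdots=\height(\alpha_n)=\height(\alpha)}$, whence ${\height(\alpha_1)+\cdots+\height(\alpha_n)=n\height(\alpha)}$. Now Proposition~\ref{prhpol}\eqref{iroots}, applied to $P=\Mu_\alpha$, gives
\[
\bigl|\heightp(\Mu_\alpha)-n\height(\alpha)\bigr|=\bigl|\heightp(\Mu_\alpha)-\bigl(\height(\alpha_1)+\cdots+\height(\alpha_n)\bigr)\bigr|\le n\log 2.
\]
Combining this with $\heightp(\Mu_\alpha)=\log|\Mu_\alpha|$ established above, we obtain ${\bigl|\log|\Mu_\alpha|-n\height(\alpha)\bigr|\le n\log 2}$, which is the claim. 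The only point one must be slightly careful about is that $\heightp$ (unlike $\height$) is well-defined only for non-zero polynomials and is invariant under scaling by $\bar\Q^\times$ — both harmless here since $\Mu_\alpha\ne 0$ and we never rescale it.
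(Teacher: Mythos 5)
Your proof is correct and follows exactly the route the paper takes: Remark~\ref{reintpol} (with $\delta(\Mu_\alpha)=1$ from $\Z$-irreducibility) to identify $\heightp(\Mu_\alpha)$ with $\log|\Mu_\alpha|$, item~\ref{igal} of Proposition~\ref{prhe} to equate the heights of the conjugates, and item~\ref{iroots} of Proposition~\ref{prhpol} for the $n\log 2$ bound. The paper states this in one line; your write-up simply fills in the same details.
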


\begin{proof}
This follows from Remark~\ref{reintpol}, item~\ref{igal} of Proposition~\ref{prhe} and item~\ref{iroots} of Proposition~\ref{prhpol}. 
\end{proof}

\begin{remark}
\label{rebridge}
Corollary~\ref{cmihe} provides a bridge between the old-fashioned height used in~\cite{Ma65} and the absolute logarithmic height. Using it, we can give an equivalent definition of $U_m$-numbers, that will be used in  Sections~\ref{sgap} and~\ref{sproof}. 
A complex transcendental number~$\gamma$ is called $U_m$-number if~$m$ is the smallest integer with the following property: for every ${w>0}$ there exist infinitely many  algebraic~$\beta$ of degree~$m$ such that ${|\gamma-\beta|\le e^{-w\height(\beta)}}$. %
\end{remark}

\section{A gap principle}
\label{sgap}

Our principal technical tool is the following \textit{gap principle}.

\begin{theorem}
\label{thgap}
Let~$\gamma$ be a complex transcendental number and~$d$ a positive number. 
Let $(\beta_n)_{n\ge1}$ be a sequence of algebraic numbers of degree not exceeding~$d$ converging to~$\gamma$ (in the complex topology). Let $(w_n)_{n\ge1}$ be a sequence  of positive real numbers with ${w_n\to \infty}$, such that 
\begin{equation}
\label{eapprogabe}
|\gamma-\beta_n|\le Ce^{-w_n\height(\beta_n)} \qquad (n\ge1)
\end{equation}
with some ${C>0}$. 
Assume that there exists ${A>0}$ such that 
\begin{equation}
\label{egap}
\height(\beta_{n+1})\le Aw_n \height(\beta_n) \qquad (n \ge 1). 
\end{equation}  
Then there exists positive numbers~$c$ and~$\eta$ such that for every algebraic~$\beta$  of degree not exceeding~$d$, distinct from  each of~$\beta_n$, we have
\begin{equation}
\label{elowerbebe}
|\gamma-\beta|\ge ce^{-\eta\height(\beta)}.
\end{equation}
\end{theorem}

Very informally, if~$\gamma$ has  a ``sufficiently dense'' set of very good approximations by algebraic numbers of bounded degree, then it cannot be well approximated by the algebraic numbers of bounded degree not belonging to this set.

We do not know if this statement can be found in the literature verbatim, but arguments of this flavor has been known since long ago;  see, for instance,  \mbox{LeVeque's}\ proof~\cite{Le53} of existence of $U_m$-numbers; a simpler exposition can be found   in Bugeaud~\cite[Theorem~7.4]{Bu04}. Another example is the work of Alniaçik~\cite{Al79} about the values of rational functions at Liouville numbers. 

To illustrate the idea of the proof in its purity, 
let us consider a very special case
\begin{equation}
\label{efeliou}
d=1, \qquad \gamma:=\sum_{n=1}^\infty 2^{-n!}, \qquad \beta_n:=\sum_{k=1}^n 2^{-k!}. 
\end{equation}

\begin{proposition}
\label{prantibu}
Let $p/q$ be a rational number, distinct from each of $\beta_n$, as defined in~\eqref{efeliou}. Then 
\begin{equation}
\label{elowerqq}
\left|\gamma-\frac pq\right|\ge \frac c{q^3}, 
\end{equation}
where ${c>0}$ is an absolute constant. 
\end{proposition}

Thus, in this special case Theorem~\ref{thgap} holds with ${\eta=3}$. 

\begin{proof}
Write the rational number $\beta_n$ as $p_n/q_n$, where ${q_n=2^{n!}}$.
A simple estimate shows that  ${|\gamma-p_n/q_n|\le 2q_n^{-n-1}}$. 
Now let~$n$ be such that 
$$
q_n\le q^2<q_{n+1}=q_n^{n+1}.
$$ 
Since ${p/q\ne p_n/q_n}$ by the hypothesis, we have ${|p/q-p_n/q_n|\ge 1/qq_n}$.  It follows that 
$$
\left|\gamma-\frac pq\right|\ge \left|\frac pq -\frac{p_n}{q_n}\right|- \left|\gamma-\frac{p_n}{q_n}\right| \ge \frac{1}{qq_n}-\frac{2}{q_n^{n+1}}. 
$$
When ${n\ge n_0}$, where~$n_0$ is an absolute constant, we have 
$$
\frac{2}{q_n^{n+1}} \le \frac{1/2}{q_n^{(n+3)/2}} \le \frac{1/2}{qq_n}. 
$$
Hence for ${q\ge q_0:= (q_{n_0})^{1/2}}$ we have 
$$
\left|\gamma-\frac pq\right|\ge \frac{1/2}{qq_n}\ge  \frac{1/2}{q^3}. 
$$
Thus, for ${q\ge q_0}$, inequality~\eqref{elowerqq} holds with ${c=1/2}$. After adjusting~$c$, it holds  for   ${q<q_0}$ as well. 
\end{proof}

\begin{remark}
\label{rexpl}
The key point of this argument is properly choosing~$n$. We define~$n$ from inequality ${q_n\le q^2<q_{n+1}}$.  In fact, the same argument works if we use 
\begin{equation}
\label{elamm}
q_n\le q^\lambda<q_{n+1}
\end{equation} 
with any ${\lambda>1}$. In this case, exponent~$3$ in~\eqref{elowerqq} becomes ${1+\lambda}$. Thus, one can obtain a lower bound of the shape ${|\gamma-p/q|\ge c(\eps)q^{-2-\eps}}$ for any ${\eps>0}$. 
\end{remark}

\begin{proof}[Proof of Theorem~\ref{thgap}]
To start with, let us show that 
\begin{equation}
\label{eheinf}
\height(\beta_n) \to \infty
\end{equation}
as ${n\to \infty}$.  Indeed, assume the contrary: there exists ${B>0}$ such that ${\height(\beta_n) \le B}$ for infinitely many~$n$. Northcott's theorem (item~\ref{inorth} of Proposition~\ref{prhe}) implies that there exist at most finitely many algebraic numbers of degree bounded by~$d$ and height bounded by~$B$. It follows that infinitely many terms of the sequence $(\beta_n)$ are equal to the same algebraic number. Since ${\beta_n \to \gamma}$, this implies that~$\gamma$ is an algebraic number, a contradiction. This proves~\eqref{eheinf}.

Next, since ${\beta\ne \beta_n}$ by the hypothesis, Proposition~\ref{prli} implies that for all~$n$ we have 
$$
|\beta-\beta_n| \ge c_1e^{-d^2(\height(\beta)+\height(\beta_n))}, \qquad c_1:= 2^{-d^2}. 
$$
Hence 
\begin{equation}
\label{egamibe}
|\gamma-\beta|\ge |\beta-\beta_n|-|\gamma-\beta_n| \ge c_1e^{-d^2(\height(\beta)+\height(\beta_n))} - Ce^{-w_n\height(\beta_n)}. 
\end{equation}
Since both ${\height(\beta_n)\to \infty}$ and ${w_n\to \infty}$, there exists a positive integer~$n_0$ such that for all ${n\ge n_0}$ we have 
\begin{equation}
\label{ebig}
\frac12c_1e^{-d^2\height(\beta_n)} >C e^{-(w_n/2)\height(\beta_n)}. 
\end{equation}

Set ${\lambda:= 2d^2A}$, where~$A$ is from~\eqref{egap}. Assume first that 
\begin{equation}
\label{ebebig}
\lambda \height(\beta) \ge \height(\beta_{n_0})
\end{equation}
In this case the set of indices ${n\ge n_0}$ with the property ${\lambda\height(\beta) \ge \height(\beta_{n})}$ is not empty. Since ${\height(\beta_n)\to \infty}$, there exists ${n\ge n_0}$ with the property 
\begin{equation}
\label{ellla}
\height(\beta_n) \le \lambda \height(\beta)  < \height(\beta_{n+1}). 
\end{equation}
We fix one such~$n$. (One may compare~\eqref{ellla} with~\eqref{elamm}.) 

Our definition of~$\lambda$ together with~\eqref{egap} implies that 
$$
d^2\height(\beta) < \frac{1}{2A}\height(\beta_{n+1}) \le \frac12 w_n\height(\beta_n). 
$$
Combining this with~\eqref{ebig}, we obtain
$$
\frac12c_1e^{-d^2(\height(\beta)+\height(\beta_n))} \ge Ce^{-w_n\height(\beta_n)}. 
$$
Together with~\eqref{egamibe}, this implies that
$$
|\gamma-\beta|\ge \frac12{c_1}e^{-d^2(\height(\beta)+\height(\beta_n))}  \ge \frac12{c_1} e^{-d^2(1+\lambda)\height(\beta)}. 
$$
Thus, in the case~\eqref{ebebig} inequality~\eqref{elowerbebe} holds with ${\eta:=d^2(1+\lambda)}$ and ${c:=c_1/2}$.  

To complete the proof, note that~\eqref{ebebig} may fail for at most finitely many~$\beta$  by Northcott.  By adjusting~$c$, the lower bound~\eqref{elowerbebe} extends to these~$\beta$ as well. 
\end{proof}

\section{Proof of Theorem~\ref{thus}}
\label{sproof}
In this section, unless the contrary is stated explicitly, we adopt the set-up of Theorem~\ref{thus}. Thus, ${f(z)=\sum_{k=0}^\infty a_kz^k}$ is a power series with integer coefficients having convergence radius ${R_f>0}$, and there exist integer sequences $(s_n)_{n\ge 1}$, $(t_n)_{n\ge 0}$ satisfying~\eqref{emon},~\eqref{ezer},~\eqref{elacu}  and~\eqref{estrlac}.  We use notation $P_n(z)$ and $F_n(z)$ as defined in Section~\ref{sintro}. 
We let~$\alpha$ be an algebraic number with ${|\alpha|<R_f}$, satisfying~\eqref{econdmal}. 
In what follows, constants implied by 
the Vinogradov notation ``$\ll$'' may depend on~$f$,~$\alpha$ and~$d$.

We will be using Theorem~\ref{thgap} with 
$$
d:=[\Q(\alpha):\Q], \qquad \gamma:=f(\alpha), \qquad \beta_n:=F_n(\alpha).
$$ 
Note that~$\gamma$ is a transcendental number by Theorem~\ref{thmahler}. 

To start with, we   estimate  the coefficients $a_k$ and the differences ${\gamma-\beta_n}$. Set 
\begin{equation*}
r:=\frac{R_f+|\alpha|}{2}, \qquad \theta := \log \frac{r}{|\alpha|}, \qquad M:=\max_{|z|=r}|f(z)|. 
\end{equation*}
From 
$$
a_k=\frac1{2\pi i}\int_{|z|=r}\frac{f(z)}{z^{k+1}}dz
$$
we bound 
\begin{equation}
\label{eboundak}
|a_k|\le Mr^{-k}, 
\end{equation}
which implies that ${|a_k\alpha^k|\le Me^{-\theta k}}$. Now 
\begin{equation}
\label{ediffer}
|\gamma-\beta_n| =\left|\sum_{k=t_n}^\infty a_k\alpha^k\right|\le M e^{-\theta t_n}\sum_{k=0}^\infty e^{-\theta k} = Ce^{-\theta t_n}
\end{equation}
with 
${C:= M/(1-e^{-\theta})}$.

Next, let us estimate the height and the degree of the polynomial $F_n(z)$:
$$
\deg F_n \le s_n, \qquad \height(F_n) \ll s_n, 
$$
where the estimate for degree follow from the definition of~$F_n$, and the estimate for the height follows from~\eqref{eboundak} by Remark~\ref{reintpol}.  Using item~\ref{ihpal} of Proposition~\ref{prhpol}, we deduce the estimate
\begin{equation}
\label{ehefnab}
\height(\beta_n)=\height\bigl(F_n(\alpha) \bigr) \ll s_n. 
\end{equation}

To apply Theorem~\ref{thgap}, we
need to verify that the numbers~$w_n$ can be defined to satisfy~\eqref{eapprogabe} and~\eqref{egap}.  
Arguing as in the proof of Theorem~\ref{thgap}, we show that ${\height(\beta_n) \to \infty}$ as ${n\to \infty}$. In particular, ${\height(\beta_n) >0}$ for all but finitely many~$n$. 
We set ${w_n:= \theta t_n/\height(\beta_n)}$ if ${\height(\beta_n) >0}$. For the finitely many~$n$ with ${\height(\beta_n) =0}$ we set ${w_n:=1}$. With this definitions~\eqref{ediffer} implies that
\begin{equation}
\label{elf}
|\gamma-\beta_n| \le Ce^{-w_n\height(\beta_n)} \qquad (n\ge 1), 
\end{equation}
which is~\eqref{eapprogabe}. Condition~\eqref{elacu} together with~\eqref{ehefnab} implies that ${w_n\to \infty}$, while condition~\eqref{estrlac} together with~\eqref{ehefnab} implies that ${\height(\beta_{n+1})\ll w_n\height(\beta_n)}$, which is~\eqref{egap}.
Thus, all the hypotheses of Theorem~\ref{thgap} are satisfied.

Now we are ready to complete the proof of Theorem~\ref{thus}.  As indicated in Remark~\ref{rebridge}, we have to prove that for every ${w>0}$  inequality 
\begin{equation}
\label{egoodapp}
|\gamma-\beta|<e^{-w\height(\beta)}
\end{equation}
holds for infinitely many algebraic~$\beta$ of degree~$m$, and for some ${w>0}$ inequality~\eqref{egoodapp} holds for at most finitely many algebraic~$\beta$ of degree strictly smaller than~$m$.

Recall that, by the assumption, among the numbers~$\beta_n$ infinitely many are of degree~$m$, but at most finitely many are of smaller degree. 

Let ${w>0}$. Since ${w_n\to \infty}$ and ${\height(\beta_n)\to \infty}$, we have ${|\gamma-\beta_n|\le C e^{-w\height(\beta_n)}}$ for all but finitely many~$n$. Since, among the numbers~$\beta_n$,  infinitely many are of degree~$m$,  inequality~\eqref{egoodapp} holds for infinitely many algebraic~$\beta$ of degree~$m$.

Next,   there exist ${\eta>0}$ and ${c>0}$ such that inequality
${|\gamma-\beta|\ge ce^{-\eta\height(\beta)}}$
holds for all algebraic~$\beta$ of degree smaller than~$m$. Indeed, Theorem~\ref{thgap} implies that it is true for all such~$\beta$ that are distinct from any of the $\beta_n$. Since at most finitely many of  the numbers $\beta_n$  are of degree  smaller than~$m$, it  holds for these finitely many numbers as well, after adjusting~$c$.  It follows that, when ${w>\eta}$,  inequality~\eqref{egoodapp} holds for at most finitely many algebraic~$\beta$ of degree strictly smaller than~$m$. 
Theorem~\ref{thus} is proved.

\section{Proof of corollaries}
\label{scols}

The proof of Corollary~\ref{coliou} requires a  lemma. Given a field~$K$, an element ${\lambda \in K}$ and a positive integer~$m$, we use the notation 
${\lambda K^m:=\{\lambda\beta^m:\beta \in K\}}$. 

\begin{lemma}
\label{lradic}
Let~$K$ be a field of characteristic~$0$ and  ${\alpha \in K}$. Let~$m$ be a positive integer. Fix some determination of the $m^\tho$ root $\alpha^{1/m}$ (in an algebraic closure of~$K$), and for every ${d\mid m}$ define ${\alpha^{1/d}:= (\alpha^{1/m})^{m/d}}$.
\begin{enumerate}

\item
\label{ilang}
 Assume 
that for every prime  ${p\mid m}$ we have ${\alpha\notin K^p}$. If ${4\mid m}$ then we assume, in addition, that ${\alpha\notin -4K^4}$. Then  $\alpha^{1/m}$ is of degree~$m$ over~$K$. 

\item
\label{iconrad}
Assume that $\alpha^{1/m}$ is of degree~$m$ over~$K$,  and  that every root of unity from the field $K(\alpha^{1/m})$ is contained in~$K$. Then every field in between~$K$ and $K(\alpha^{1/m})$ is of the form $K(\alpha^{1/d})$ for some ${d\mid m}$.

\end{enumerate}

\end{lemma}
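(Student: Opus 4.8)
The plan is to treat the two parts as, essentially, two classical facts about radical (Kummer) extensions, and assemble standard references and short arguments. For part~\ref{ilang}, the statement is exactly the classical irreducibility criterion for ${T^m-\alpha}$ over a field (sometimes attributed to Capelli, and proved in Lang's \emph{Algebra}, Ch.~VI, Theorem~9.1): if~$K$ is any field, ${\alpha\in K}$, and for every prime ${p\mid m}$ one has ${\alpha\notin K^p}$, and moreover ${\alpha\notin -4K^4}$ whenever ${4\mid m}$, then ${T^m-\alpha}$ is irreducible over~$K$, hence $\alpha^{1/m}$ has degree~$m$. I would state this, cite it, and optionally sketch the reduction: induct on~$m$; the prime-degree case ${\alpha\notin K^p}$ is elementary; the ${4\mid m}$ subtlety comes from the factorization ${T^4+4c^4=(T^2-2cT+2c^2)(T^2+2cT+2c^2)}$. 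Since we only need characteristic~$0$ here, no further care about inseparability is needed.

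For part~\ref{iconrad}, set ${L:=K(\alpha^{1/m})}$, assume ${[L:K]=m}$, and assume every root of unity in~$L$ lies in~$K$; we want every intermediate field ${K\subseteq M\subseteq L}$ to be of the form $K(\alpha^{1/d})$ with ${d\mid m}$. The first step is to reduce to the Kummer situation: let ${\zeta_m}$ be a primitive $m$th root of unity (in an algebraic closure), let ${K':=K(\zeta_m)}$ and ${L':=L(\zeta_m)=K'(\alpha^{1/m})}$. Then ${L'/K'}$ is a cyclic Kummer extension whose degree is the order of~$\alpha$ in ${K'^\times/(K'^\times)^m}$; by the hypothesis that $\alpha^{1/m}$ has degree~$m$ over~$K$, and by a gcd/coprimality argument using ${[K':K]\mid \varphi(m)}$, one checks ${[L':K']=m}$ as well. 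Now Kummer theory gives a perfect pairing between intermediate fields of ${L'/K'}$ and subgroups of the cyclic group ${\langle \alpha\rangle\subseteq K'^\times/(K'^\times)^m}$, i.e. intermediate fields of ${L'/K'}$ are exactly the $K'(\alpha^{1/d})$ for ${d\mid m}$.

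The third step — and the one I expect to be the main obstacle — is to \emph{descend} this classification from ${L'/K'}$ to ${L/K}$, and this is precisely where the ``no new roots of unity'' hypothesis is used. Given an intermediate field ${K\subseteq M\subseteq L}$, consider ${M':=M(\zeta_m)}$, an intermediate field of ${L'/K'}$, so ${M'=K'(\alpha^{1/d})}$ for some ${d\mid m}$; the task is to show ${M=K(\alpha^{1/d})}$, i.e. that adjoining $\zeta_m$ does not collapse the tower. Here I would argue with degrees: ${[L:K]=[L':K']=m}$ forces ${[L':L]=[K':K]}$, and the same multiplicativity applied inside~$M$ forces ${[M':M]=[K':K]}$, so ${[M:K]=[M':K']=m/d}$; on the other hand ${K(\alpha^{1/d})\subseteq M}$ (one must check $\alpha^{1/d}\in M$: since $\alpha^{1/d}=(\alpha^{1/m})^{m/d}\in L$ and it generates $K'(\alpha^{1/d})=M'$ over $K'$, a conjugation/Galois argument shows its $K$-conjugates differ by $m/d$th roots of unity, which by hypothesis lie in $K$, so $K(\alpha^{1/d})/K$ is already of degree $m/d$ and hence equals $M$). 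The delicate point throughout is that the root-of-unity hypothesis is exactly what rules out ``accidental'' subfields of~$L$ that only become radical after base change to~$K'$; I would make sure the final conjugation argument is airtight, perhaps by showing directly that $K(\alpha^{1/d})$ is Galois-stable in the normal closure and computing the fixed field. Once both parts are assembled, the lemma follows.
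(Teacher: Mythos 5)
For part~\ref{ilang} you do exactly what the paper does: cite the classical irreducibility criterion for $T^m-\alpha$ from Lang. For part~\ref{iconrad}, your route (adjoin $\zeta_m$, classify the intermediate fields of $L'/K'$ by Kummer theory, then descend) is genuinely different from the paper's, but it has a concrete gap at its very first step: the claim $[L':K']=m$ is false in general. Take $K=\Q$, $m=8$, $\alpha=2$. The hypotheses of part~\ref{iconrad} hold: $[\Q(2^{1/8}):\Q]=8$ and $\Q(2^{1/8})$ is real, so its only roots of unity are $\pm1$. But $\sqrt2\in\Q(\zeta_8)=K'$, so $2$ is a square in $K'$ and $T^8-2$ factors over $K'$; in fact $[K'(2^{1/8}):K']=4$. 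Your coprimality heuristic fails because $[K':K]=4$ is not coprime to $m=8$, and, more to the point, the root-of-unity hypothesis does \emph{not} force $L\cap K'=K$: here $L\cap K'=\Q(\sqrt2)$, since $K'$ contains elements such as $\sqrt2$ that are not roots of unity. Consequently the degree bookkeeping in your descent step (``$[L:K]=[L':K']$ forces $[L':L]=[K':K]$'', and likewise for $M$) breaks down. The approach could presumably be repaired by working with the true value of $[L':K']$ and tracking $L\cap K'$ and $M\cap K'$ explicitly, but as written the argument does not go through.

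For comparison, the paper avoids base change entirely with a short norm argument: if $K\subset L\subset K(\alpha^{1/m})$ with $[L:K]=d$, the norm of $\alpha^{1/m}$ from $K(\alpha^{1/m})$ down to $L$ is a product of $m/d$ conjugates, each of the form $\alpha^{1/m}\zeta$ with $\zeta$ an $m$th root of unity; hence this norm equals $\alpha^{1/d}\xi$ with $\xi$ a root of unity lying in $K(\alpha^{1/m})$. The hypothesis puts $\xi\in K$, so $\alpha^{1/d}\in L$, and comparing degrees gives $L=K(\alpha^{1/d})$. This is both shorter and sidesteps the issue above; you may want to adopt it.
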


The hypothesis about the roots of unity in item~\ref{iconrad} cannot be suppressed. For instance, let ${K=\Q}$, ${m=4}$ and  ${\alpha =-1}$, in which case ${\alpha^{1/m}=\zeta_8}$, the primitive $8^\tho$ root of unity. Then the fields $\Q(\sqrt2)$ and $\Q(\sqrt{-2})$ are contained in $\Q(\zeta_8)$, but they are not of the form $\Q(\zeta_8^d)$ for some ${d\mid 4}$.

\begin{proof}
Item~\ref{ilang} is Theorem~9.1  in~\cite[Chapter~VI]{La02}. The proof of item~\ref{iconrad} can be found, for instance, in Conrad's online note~\cite{Co25}. For the reader's convenience, we prove it here as well.
Since $\alpha^{1/m}$ is of degree~$m$ over~$K$, the degree of $\alpha^{1/d}$ over~$K$ must be~$d$ for every ${d\mid m}$. Now let~$L$ be an intermediate field: ${K\subset L\subset K(\alpha^{1/m})}$. Set ${d:=[L:K]}$, so that ${[K(\alpha^{1/m}):L]=m/d}$. The conjugates of $\alpha^{1/m}$ over~$K$ are all of the form $\alpha^{1/m}\zeta$, where~$\zeta$ is an $m^\tho$ root of unity. Hence 
$$
\NN_{K(\alpha^{1/m})/L}(\alpha^{1/m}) = (\alpha^{1/m})^{m/d}\xi= \alpha^{1/d}\xi,
$$ 
where~$\xi$ is a root of unity. 

We have ${\xi \in K(\alpha^{1/m})}$; by the hypothesis, this implies that ${\xi \in K}$. Hence ${\alpha^{1/d}\in L}$. Comparing the degrees, we obtain ${L=K(\alpha^{1/d})}$. 
\end{proof}

\begin{proof}[Proof of Corollary~\ref{coliou}]
It suffices to prove that 
$$
\beta:=F_{m-1}(\alpha^{1/m}) = \sum_{k=1}^{m-1}(\alpha ^{1/m})^{k!}
$$
is algebraic of degree~$m$, because for ${n\ge m}$ we have ${F_n(\alpha^{1/m}) -\beta \in \Q}$. 

By the hypothesis, ${\alpha \notin \Q^p}$ for every prime ${p\mid m}$; also, ${\alpha \notin -4\Q^2}$  because ${\alpha >0}$. Item~\ref{ilang} of Lemma~\ref{lradic} implies that $\alpha^{1/m}$ is of degree~$m$, and all the $m^\tho$ roots of~$\alpha$ are conjugate over~$\Q$. Since ${\alpha>0}$, at least one of these roots is real, which implies that the field $\Q(\alpha^{1/m})$ cannot have roots of unity other than $\pm1$. Item~\ref{iconrad} of Lemma~\ref{lradic} implies that the only subfields of $\Q(\alpha^{1/m})$ are $\Q(\alpha^{1/d})$ for ${d\mid m}$.

Since ${\beta \in \Q(\alpha^{1/m})}$, we can write 
$$
\beta = a_0+a_1\alpha^{1/m} + \cdots + a_{m-1}(\alpha^{1/m})^{m-1}, \qquad a_0, \ldots, a_{m-1} \in \Q. 
$$
Moreover, we have the explicit formula
$$
a_\ell = \sum_{\genfrac{}{}{0pt}{}{1\le k\le m-1}{k!\equiv \ell \bmod m}}\alpha^{(k!-\ell)/m} \qquad (\ell= 0, 1, \ldots, m-1). 
$$
In particular, for ${\ell=1}$ the sum on the right  contains the term ${\alpha^{(1!-1)/m}=1}$. Since ${\alpha>0}$, this implies that ${a_1\ge 1}$.

Now if~$\beta$ is of degree~$d$, then we must have ${d\mid m}$ and ${\beta \in \Q(\alpha^{1/d})}$. This implies that ${a_\ell=0}$ unless~$\ell$ is divisible by $m/d$. Since ${a_1\ne 0}$, this is only possible when ${d=m}$. 
\end{proof}

\begin{proof}[Proof of Corollary~\ref{cosim}]
Let~$\alpha$ be a simple algebraic integer of degree~$m$, satisfying~\eqref{econdmal}.  To start with, let us note that the set ${\{F_n(\alpha): n\ge 1\}}$ is infinite. Indeed, if it is  finite, then the sequence $F_n(\alpha)$ is a convergent sequence with finitely many distinct terms, which means that it has equal terms from some point on. Hence ${P_n(\alpha)=0}$ for sufficiently large~$n$,  which contradict~\eqref{econdmal}.

Now, by Theorem~\ref{thus}, it suffices to prove  that for all but finitely many~$n$,  the algebraic numbers 
$F_n(\alpha)$ are  of degree~$m$. 
Assume the contrary: for infinitely many~$n$ the number $F_n(\alpha)$ is of degree smaller than~$m$. Since~$\alpha$ is simple, for those~$n$ we must have ${F_n(\alpha)\in \Q}$. Since~$\alpha$ is an algebraic integer, so is $F_n(\alpha)$. Thus, ${F_n(\alpha) \in \Z}$ for infinitely many~$n$.  
Since ${F_n(\alpha)\to f(\alpha)}$, this implies that ${f(\alpha) \in \Z}$. Since~$\alpha$ satisfies~\eqref{econdmal}, this contradicts Theorem~\ref{thmahler}.
\end{proof}

Corollary~\ref{coalmost} follows immediately from Corollary~\ref{cosim} and the following simple observation, which might be of independent interest. 

\begin{proposition}
\label{prantima}
Let~$f$ be strongly lacunary series of bounded type, and~$d$ a positive number.  Then~\eqref{econdmal} holds (and $f(\alpha)$ is thereby transcendental) for all but finitely many algebraic~$\alpha$ of degree not exceeding~$d$. 
\end{proposition}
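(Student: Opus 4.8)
The plan is to show that the set of algebraic $\alpha$ of degree at most $d$ for which \eqref{econdmal} \emph{fails} --- that is, for which $P_n(\alpha)=0$ for all sufficiently large $n$ --- is finite. Fix such an $\alpha$, and let $N$ be such that $P_n(\alpha)=0$ for all $n\ge N$. The key observation is that each $P_n$ is a \emph{non-zero} polynomial with integer coefficients whose absolute value $|P_n|\le A$ is bounded (bounded type), and whose degree is at most $s_n$. Since $P_n(\alpha)=0$ and $\Mu_\alpha$ is the minimal polynomial of $\alpha$, we have $\Mu_\alpha(T)\mid P_n(T)$ in $\Q[T]$, hence in $\Z[T]$ up to a rational factor; in particular $\deg\Mu_\alpha\le s_n$ (automatic for large $n$ and useless), but more usefully one extracts a height bound on $\alpha$ from the fact that $\alpha$ is a root of the bounded-height polynomial $P_n$.

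First I would make this precise. By item~\ref{ipalzer} of Proposition~\ref{prhpol} (or directly item~\ref{iprodpol}/\ref{iroots}), since $\alpha$ is a root of $P_n$ and $\heightp(P_n)\le\height(P_n)=\log|P_n|\le\log A$ by Remark~\ref{reintpol}, we get
\[
\height(\alpha)\le \heightp(P_n)+\log 2\le \log A+\log 2=\log(2A).
\]
Thus \emph{every} $\alpha$ of degree at most $d$ failing \eqref{econdmal} has height bounded by the absolute constant $\log(2A)$. By Northcott's theorem (item~\ref{inorth} of Proposition~\ref{prhe}), there are only finitely many algebraic numbers of degree at most $d$ and height at most $\log(2A)$. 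Hence \eqref{econdmal} holds for all but finitely many $\alpha$ of degree at most $d$; and for each such $\alpha$, provided also $|\alpha|<R_f=1$, Theorem~\ref{thmahler} gives that $f(\alpha)$ is transcendental.

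One subtlety to address: the statement of the proposition asserts transcendence of $f(\alpha)$, which via Theorem~\ref{thmahler} additionally requires $|\alpha|<R_f$. For a bounded-type series $R_f=1$, and among algebraic $\alpha$ of degree at most $d$ with $|\alpha|\ge 1$ --- we cannot conclude anything about $f(\alpha)$ (indeed $f$ need not even converge there), so I would read ``$f(\alpha)$ is transcendental'' as implicitly restricted to the disc of convergence, or simply note that the conclusion about \eqref{econdmal} holding is the substantive one and the transcendence is its immediate consequence wherever $f(\alpha)$ is defined. The only real point requiring care is the extraction of a height bound for $\alpha$ from a single non-zero bounded-height integer polynomial vanishing at it, which is exactly item~\ref{ipalzer} of Proposition~\ref{prhpol}; so there is no genuine obstacle here --- the proposition is essentially a one-line application of Proposition~\ref{prhpol}\ref{ipalzer} together with Northcott's theorem, once one notices that strong lacunarity is not even needed beyond guaranteeing that each $P_n$ is a genuine non-zero polynomial.
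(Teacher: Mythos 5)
Your proof is correct and follows essentially the same route as the paper: bounded type gives $\heightp(P_n)\le\log A$, item~\ref{ipalzer} of Proposition~\ref{prhpol} then bounds $\height(\alpha)\le\log(2A)$ for any $\alpha$ failing~\eqref{econdmal}, and Northcott's theorem finishes. The only pedantic caveat (present equally in the paper's version) is that not literally \emph{each} $P_n$ need be non-zero, but infinitely many are, which suffices to apply the height bound to some non-zero $P_n$ vanishing at $\alpha$.
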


\begin{proof}
Since~$f$ is of bounded type, there exists ${A>0}$ such that ${|a_k|\le A}$ for all~$k$. Hence ${\heightp(P_n) \le \log A}$ for all~$n$, see Remark~\ref{reintpol}. Item~\ref{ipalzer} of Proposition~\ref{prhpol} now implies that ${\height(\alpha) \le \log(2A)}$ for every~$\alpha$ not satisfying~\eqref{econdmal}. By Northcott's theorem (item~\ref{inorth} of Proposition~\ref{prhe}), there can be at most finitely many such~$\alpha$ of degree bounded by~$d$. 
\end{proof}

\begin{remark}
Bounding the degree in Proposition~\ref{prantima} is crucial: a strongly lacunary series of bounded type may take algebraic values at infinitely many algebraic numbers, see Section~6 of Mahler~\cite{Ma65}. 
\end{remark}

Corollary~\ref{comany} follows from Corollary~\ref{coalmost} and a  result on distribution of Galois groups, essentially due to S.~D.~Cohen. By the\textit{ Galois group} of a  polynomial ${P(T)\in \Q[T]}$ of degree~$m$ we mean  its  Galois group over~$\Q$, viewed as a  subgroup of the symmetric group $S_m$. 

\begin{proposition}
\label{pcohen}
Let ${m\ge 2}$ and let~$b$ be a nonzero integer. Let  ${X\ge 2}$. Then, among the  monic polynomials ${P(T)\in \Z[T]}$ of degree~$m$, satisfying 
$$
|P|\le X, \qquad P(0)=b, 
$$
at most  ${O(X^{m-3/2}\log X)}$  
may have Galois group other than~$S_m$. The implicit constant here depends  on~$m$ and~$b$. 
\end{proposition}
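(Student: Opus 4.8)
The plan is to deduce Proposition~\ref{pcohen} from the work of S.~D.~Cohen on the density of reducible and otherwise ``non-generic'' polynomials, by a standard reduction. First I would recall the relevant statement from \cite{Co80,Co81,Co81a}: among all monic integer polynomials of degree~$m$ with $|P|\le X$, the number whose Galois group over~$\Q$ is a proper subgroup of $S_m$ is $O(X^{m-1/2}\log X)$ (the dominant contribution coming from polynomials with a rational root or, more generally, a proper rational factor; polynomials that are irreducible with Galois group a proper transitive subgroup contribute far less, $O(X^{m-1}\log X)$ or smaller). The total count of monic polynomials with $|P|\le X$ is $\asymp X^{m}$, with the $m$ free coefficients $a_0,\dots,a_{m-1}$ each ranging over an interval of length $\asymp X$.

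Next I would carry out the reduction to the sliced family $P(0)=b$. Fixing one coefficient, namely $a_0=b$, cuts down one degree of freedom, so the sliced family has $\asymp X^{m-1}$ members. The heuristic is that the exceptional set in the sliced family should have size $O(X^{m-3/2}\log X)$, i.e.\ one power of $X^{1/2}$ smaller than the full exceptional set, exactly mirroring the drop from $X^{m}$ to $X^{m-1}$ in the total count. To make this rigorous I would appeal to the fact that Cohen's estimates are in fact proved for polynomials whose coefficients are restricted to prescribed boxes (or arithmetic progressions), or alternatively run his sieve/large-sieve argument with the coefficient $a_0$ frozen at the value~$b$; the van der Corput / large sieve input that produces the $\log X$ and the square-root saving is insensitive to fixing one coordinate, so one gets $O(X^{(m-1)-1/2}\log X)=O(X^{m-3/2}\log X)$ directly. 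One must also dispose of the finitely many degenerate cases: if $b=0$ then $T\mid P(T)$ always, so the hypothesis $b\ne0$ is exactly what is needed to keep $0$ from being a forced root, and for $|b|$ bounded the number of proper factorizations $P=QR$ with $Q(0)R(0)=b$ is controlled because $Q(0),R(0)$ range over the finitely many divisors of~$b$.

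The key steps, in order, are: (1)~state precisely the version of Cohen's theorem to be used, with explicit dependence on the shape of the coefficient region; (2)~specialize the coefficient region to the hyperplane slice $a_0=b$, of ``volume'' $\asymp X^{m-1}$; (3)~verify that the error term in Cohen's estimate scales to $O(X^{m-3/2}\log X)$ on this slice, treating separately the reducible polynomials (handled by bounding, for each divisor pair $(b_1,b_2)$ of $b$, the number of monic $Q,R$ with $|Q|,|R|\ll X^{?}$ and $Q(0)=b_1$, $R(0)=b_2$) and the irreducible-but-imprimitive ones (which contribute only $O(X^{m-2}\log X)$, hence negligible); (4)~absorb the dependence on $b$ (through $d(b)$, the number of divisors) and on~$m$ into the implicit constant.

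The main obstacle I anticipate is step~(3) for the \emph{reducible} polynomials: one needs the count of monic integer $P$ of degree $m$ with $|P|\le X$, $P(0)=b$ that factor nontrivially to be $O(X^{m-3/2}\log X)$ rather than merely $O(X^{m-1})$. For a factorization $P=QR$ with $\deg Q=j$, $1\le j\le m-1$, freezing $Q(0)R(0)=b$ forces $Q(0)$ and $R(0)$ to lie in the finite set of divisors of $b$, so $Q$ has $\asymp X^{j-1}$ choices and $R$ has $\asymp X^{m-j-1}$ choices, giving $\asymp X^{m-2}$ reducible polynomials in the slice when both factors have degree $\ge 1$ and $Q(0),R(0)\ne 0$ are pinned --- already better than needed. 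The only genuinely borderline case is a linear factor $T-a$ with $a\mid b$: then $P(T)=(T-a)R(T)$ with $R$ monic of degree $m-1$ and $R(0)=-b/a$, which is again $\asymp X^{m-2}$ choices. So in fact all reducible cases give $O(X^{m-2})$, comfortably inside the claimed bound, and the dominant $X^{m-3/2}\log X$ term comes purely from Cohen's sieve bound for polynomials that fail to have full Galois group despite being irreducible --- exactly as in the unsliced statement, shifted down by $X^{1/2}$. The remaining care is simply to cite \cite{Co80} (or its companions) in a form that permits fixing $P(0)$, which the referenced acknowledgment to Joachim König suggests is precisely the point being imported.
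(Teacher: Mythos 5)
Your overall strategy (reduce everything to Cohen's work) is the paper's strategy too, and your elementary count of the \emph{reducible} polynomials in the slice $P(0)=b$ --- pinning $Q(0),R(0)$ to divisor pairs of $b$ and getting $O(X^{m-2})$ --- is correct and in fact covers the case $m=2$, which the paper treats separately by exactly that divisor argument. But the decisive step of your plan has a genuine gap. A density bound of $O(X^{m-1/2}\log X)$ for the exceptional set in the full $m$-dimensional box does \emph{not} ``specialize to the hyperplane slice $a_0=b$'': it is consistent with the entire exceptional set lying in a single slice, so no formal rescaling by the volume ratio $X^{m}/X^{m-1}$ is available. What is actually needed, and what you relegate to a closing remark as ``simply a citation,'' is a theorem about the $(m-1)$-parameter family itself. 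Concretely, the paper uses two inputs: first, Theorem~1 of \cite{Co80}, which asserts that the Galois group of $T^m+u_{m-1}T^{m-1}+\cdots+u_1T+b$ over the function field $\Q(u_1,\ldots,u_{m-1})$ is the full $S_m$ (this is a nontrivial arithmetic fact about the constrained family, valid for $m\ge3$, and is presumably the point of the König acknowledgment); second, Theorem~2.1 of \cite{Co81a}, which bounds by $O(X^{(m-1)-1/2}\log X)$ the number of integer specializations in a box of side $X$ whose Galois group differs from the generic one. Without the first input, the second only controls deviations from the generic group of the slice, which is not a priori $S_m$; your proposal never establishes (or even isolates) this fact, and your suggestion to ``re-run the sieve with $a_0$ frozen'' would amount to reproving it.

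Two smaller points. Your claimed bound $O(X^{m-2}\log X)$ for irreducible polynomials in the slice with proper transitive Galois group is asserted by analogy but not justified; fortunately it is not needed, since Cohen's specialization theorem bounds the whole exceptional set of the slice at once by $O(X^{m-3/2}\log X)$, making your three-way decomposition (reducible / imprimitive / primitive non-$S_m$) unnecessary. Also, the exponent bookkeeping in your heuristic is off: $X^{m-3/2}$ is a full power of $X$ below $X^{m-1/2}$, not ``one power of $X^{1/2}$ smaller''; the correct way to see the exponent is as $(m-1)-\tfrac12$, i.e.\ the number of free parameters of the sliced family minus one half.
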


\begin{proof}

Assume first that ${m=2}$. A polynomial of degree~$2$ has Galois group~$S_2$ if and only if it is irreducible. There exist at most finitely many monic reducible polynomials of degree~$2$ with ${P(0)=b}$, because the roots of such a polynomial are divisors of~$b$. Hence, for ${m=2}$ the statement holds even with $O(1)$ estimate. 

From now on ${m\ge 3}$.  We consider the polynomial 
$$
P(T,u_1,\ldots, u_{m-1}) := T^m+u_{m-1}T^{m-1}+\cdots +u_1T_1+b, 
$$
with indeterminate coefficients $u_1, \ldots, u_{m-1}$. Its Galois group over the field $\Q(u_1, \ldots, u_{m-1})$ is~$S_n$, see Theorem~1 in Cohen~\cite{Co80}\footnote{There is a minor mistake in~\cite{Co80}, corrected in~\cite{Co81}, but Theorem~1, that we are quoting, is not affected.} (take therein~$u_1$ and~$u_2$ as~$u$ and~$v$, and $\Q(u_3, \ldots, u_{m-1})$ as~$F$). By another result of Cohen, Theorem~2.1 from~\cite{Co81a},  there are at most  $O(X^{m-3/2}\log X)$ vectors ${(b_1, \ldots, b_{m-1})\in \Z^{m-1}}$, satisfying 
${\max\{|b_1|, \ldots, |b_{m-1}|\}\le X}$,  
such that the Galois group of the polynomial  ${T^m+b_{m-1}T^{m-1}+\cdots +b_1T+b}$ is not~$S_m$.  This completes the proof. 
\end{proof}

The recent breakthrough work of Bhargava and others (see~\cite{Bh25} and the references therein) counts polynomials with non-symmetric Galois groups among all the monic polynomials~$P$ of degree~$m$, obtaining estimates which are quantitatively sharper than those of Cohen (and, essentially, best possible). However, we need to impose the extra condition ${P(0)=b}$, which makes the results of Bhargava not suitable for our purposes.

\begin{proof}[Proof of Corollary~\ref{comany}]

There exist at least ${2(2X-1)^{m-1}}$ monic  polynomials ${P(T)\in \Z[T]}$ satisfying 
\begin{equation}
\label{epolls}
|P|\le X, \qquad P(0)\in \{1,-1\}
\end{equation}
Proposition~\ref{pcohen} implies that at least 
$$
2(2X-1)^{m-1} -C_1X^{m-3/2}\log X\ge  2^mX^{m-1} -C_2X^{m-3/2}\log X
$$
of them have Galois group~$S_m$.  Here  $C_1$ and $C_2$ are positive numbers  that  depend on~$m$.

Let $P(T)$ be one of these polynomials, that is, it is monic, satisfies~\eqref{epolls} and has Galois group~$S_m$; in particular, $P(T)$ is $\Z$-irreducible.  We may assume that $P(T)$ is not a cyclotomic polynomial. Indeed, for ${m\ge 3}$ it cannot be cyclotomic, because its Galois group $S_m$ is not abelian. There do exist~$3$ cyclotomic polynomials  with Galois group~$S_2$, but we can exclude them from consideration by adjusting the constant~$C_2$. 

Since $P(T)$ is monic, not cyclotomic and satisfies ${|P(0)|=1}$, Kronecker's theorem (item~\ref{ikro} of Proposition~\ref{prhe}) implies that $P(T)$
must have a root~$\alpha$ satisfying ${|\alpha|<1}$. Since~$P$ is irreducible,  ${P(T)=\Mu_\alpha(T)}$ is the minimal polynomial of~$\alpha$; in particular ${|\Mu_\alpha|=|P|\le X}$. Since~$P$ is monic,~$\alpha$ is an algebraic integer. Since the Galois group of $P(T)$ is symmetric,~$\alpha$ is simple. (This is because the subgroup of $S_m$ stabilizing~$\alpha$ is~$S_{m-1}$, which is known to be a maximal subgroup of~$S_m$.) 

Thus, we have ${ 2^mX^{m-1} -C_2X^{m-3/2}\log X}$ simple algebraic integers~$\alpha$ of degree~$m$ satisfying ${|\alpha|<1}$ and ${|\Mu_\alpha|\le X}$.  Corollary~\ref{coalmost} implies that ${f(\alpha)\in U_m}$  for all these~$\alpha$ with finitely many exceptions. This completes the proof.  
\end{proof}

{\footnotesize

\bibliographystyle{amsplain}
\bibliography{gen_liou}

\bigskip

\noindent 
Yuri Bilu: 
Institut de Mathématiques de Bordeaux, Université de Bordeaux \& CNRS, Talence, France; 
\url{yuri@math.u-bordeaux.fr}

\bigskip

\noindent
Diego Marques: Departamento de Matemática, Universidade de Brasília, Brasília, DF, Brazil; 
\url{diego@mat.unb.br}

\bigskip

\noindent
Carlos Gustavo Moreira: Instituto de Matemática Pura e Aplicada, Rio de Janeiro, RJ, Brazil; 
\url{gugu@impa.br}

}

\end{document}